\documentclass[11pt, a4paper]{amsart} 

\usepackage[margin=2.5cm]{geometry}

\usepackage[T1]{fontenc}
\usepackage{lmodern}

\usepackage{amssymb, amsrefs}
\usepackage{mathtools, mathrsfs, bm}
\usepackage{xcolor}
\usepackage{enumitem}
\setlist[enumerate,1]{label=(\arabic*)}

\definecolor{darkblue}{rgb}{0.0, 0.0, 0.55}
\usepackage{hyperref}
\hypersetup{
  colorlinks=true,
  linkcolor=darkblue,
  citecolor=darkblue,
  urlcolor=darkblue,
  linktoc=page,
}

\allowdisplaybreaks[1]
\numberwithin{equation}{section}

\newcommand{\C}{\mathbb{C}}
\newcommand{\R}{\mathbb{R}}

\newcommand{\ds}{\displaystyle}
\renewcommand{\le}{\leqslant}
\renewcommand{\ge}{\geqslant}
\renewcommand{\Re}{\operatorname{Re}}
\renewcommand{\Im}{\operatorname{Im}}

\DeclarePairedDelimiter{\ev}{\langle}{\rangle}
\DeclarePairedDelimiter{\abs}{\lvert}{\rvert}
\DeclarePairedDelimiter{\norm}{\lVert}{\rVert}
\providecommand\given{}
\DeclarePairedDelimiterX\set[1]{\{}{\}}{%
\renewcommand\given{\nonscript\:\delimsize\vert\nonscript\:\mathopen{}}%
#1
}

\theoremstyle{plain}
\newtheorem{theorem}{Theorem}[section]
\newtheorem{lemma}[theorem]{Lemma}
\newtheorem{proposition}[theorem]{Proposition}

\theoremstyle{definition}
\newtheorem{definition}[theorem]{Definition}

\theoremstyle{remark}
\newtheorem{remark}[theorem]{Remark}

\title[A Unified Integral Equation Approach to Conservation Laws for NLS]
{A Unified Integral Equation Approach to Conservation Laws for Nonlinear Schr\"odinger Equations}

\author{Shuji Machihara}
\address{Department of Mathematics, Faculty of Science, Saitama University, Saitama 338-8570, Japan}
\email{machihara@rimath.saitama-u.ac.jp}

\author[H. Miyazaki]{Hayato Miyazaki}
\address[]{Teacher Training Courses, Faculty of Education, Kagawa University, Takamatsu, Kagawa 760-8522, Japan}
\email{miyazaki.hayato@kagawa-u.ac.jp}

\author{Tohru Ozawa}
\address{Department of Applied Physics, Waseda University, Tokyo 169-8555, Japan}
\email{txozawa@waseda.jp}

\keywords{Nonlinear Schr\"odinger equations, conservation laws, pseudo-conformal conservation law, virial identity, Strichartz estimates}
\subjclass[2020]{Primary~35Q55, Secondary~37K06}

\begin{document}

\begin{abstract}
We present a unified framework for the rigorous derivation of conservation laws
and related identities for nonlinear Schr\"odinger equations with power-type
nonlinearities.
This approach treats the equation in its Duhamel form and uses the
space-time integrability provided by Strichartz estimates, without relying on
smooth approximations or regularization procedures.
It was first
introduced by the third author in \cite{Ozawa06} and subsequently developed
in \cites{FM17,IMMO25}.

In this paper, we establish a single integral identity from which all of the
laws and identities considered here follow systematically. These include the
conservation of charge (mass), energy, and momentum, the pseudo-conformal
conservation law, and virial-type identities.
\end{abstract}

\maketitle

\section{Introduction}

In this paper, we consider the Cauchy problem for the nonlinear Schr\"odinger equation
\begin{align} \label{nls} \tag{NLS}
\begin{cases}
\ds i \partial_t u + \frac{1}{2}\Delta u = f(u), & (t,x) \in \R \times \R^d, \\
u(0,x) = \phi(x), & x \in \R^d,
\end{cases}
\end{align}
where $u \colon \R \times \R^d \to \C$ is the unknown function, $\Delta$ is the Laplacian on $\R^d$, and $f$ is the nonlinearity.
A typical example is the power-type nonlinearity $f(u)= \pm |u|^{p-1}u$. We adopt the standard assumptions on the nonlinearity $f$ used in \cites{Ginibre97, Kato89}:
\begin{enumerate}[label=(A\arabic*), ref=A\arabic*, itemsep=2mm]
\item \label{A1}
$f \in C^{1}(\C,\C)$, $f(0)=0$, and for some
\begin{align}
  &1<p<\infty,
  \qquad
  p \le 1+\frac{4}{d-2}\ \text{if } d\ge3,
  \label{p:con} \\[4pt]
&|f'(z)| \coloneqq
\max\left(
\left|\frac{\partial f}{\partial z}\right|,
\left|\frac{\partial f}{\partial \bar z}\right|
\right)
\le C(1+|z|^{p-1}) \notag
\end{align}
for all $z \in \C$.
\item \label{A2} $\Im(\bar z f(z))=0$ for all $z \in \C$.
\item \label{A3} There exists $V \in C^{1}(\C,\R)$ such that $V(0)=0$ and
$f(z)=\partial V/\partial \bar z$.
\end{enumerate}
Assumption \eqref{A1} is the analytic condition ensuring local well-posedness and the finiteness of the relevant space-time quantities.
Assumption \eqref{A2} is the algebraic condition underlying the conservation of charge.
Combined with \eqref{A3}, it implies the gauge invariance $f(e^{i\theta}z)=e^{i\theta}f(z)$ for all $z \in \C$ and $\theta \in \R$.
Assumption \eqref{A3} provides the Hamiltonian structure underlying the conservation of energy and momentum.
Under \eqref{A2} and \eqref{A3}, one has $V(z)=V(|z|)$ for all $z \in \C$, and we write
\[
V'(|z|)=\left.\frac{dV}{dr}(r)\right|_{r=|z|}.
\]

The Cauchy problem for \eqref{nls} has been extensively studied; see, for instance, \cites{Cazenave, GV79, Kato87, SS99}.
A standard approach consists of two steps.
One first proves local well-posedness for the associated integral equation by a contraction argument based on Strichartz estimates, and then extends the local solution by means of a priori bounds provided by conservation laws.

Throughout this paper, we denote the standard inner product in $L^{2}(\R^d)$ by
\[
  (f,g)\coloneqq \int_{\R^d} f(x)\overline{g(x)}\,dx
\]
for $f,g \in L^{2}(\R^d)$.
Formally, solutions to \eqref{nls} satisfy the conservation of charge (mass), energy, and momentum:
\[
  I(u) \coloneqq \norm{u}_{L^{2}}^{2}, \qquad
  E(u) \coloneqq \frac12 \norm{\nabla u}_{L^{2}}^{2} + \int_{\R^d} V(u)\,dx, \qquad
  P(u) \coloneqq \Im \int_{\R^d} \overline{u}\,\nabla u\,dx.
\]
Among these quantities, $I(u)$ and $E(u)$ are fundamental for obtaining
the a priori bounds needed for global well-posedness in $L^{2}$ and $H^{1}$,
respectively.
The momentum, the pseudo-conformal conservation law, and virial identities also play important roles in the qualitative analysis of solutions (e.g., \cites{Glassey77,MR05}).

Under assumption \eqref{A2}, the conservation of charge is formally obtained by taking the real part of the scalar product of \eqref{nls} with $iu$:
\begin{align*}
  0 = 2\Re\left(i\partial_t u+\frac12\Delta u-f(u), iu\right)
  =\frac{d}{dt}\norm{u}_{L^{2}}^{2}.
\end{align*}
Under \eqref{A3}, the conservation of energy is formally derived by taking the real part of the scalar product of \eqref{nls} with $-\partial_t u$:
\begin{align*}
  0 = 2\Re\left(i\partial_t u+\frac12\Delta u-f(u), -\partial_t u\right)
  =\frac{d}{dt}E(u).
\end{align*}
Similarly, the conservation of momentum is formally obtained by pairing \eqref{nls} with $\nabla u$:
\begin{align*}
  0 = 2\Re\left(i\partial_t u+\frac12\Delta u-f(u), \nabla u\right)
  = 2\Re(i\partial_t u,\nabla u)-2\Re(f(u),\nabla u)
  = \frac{d}{dt}P(u(t)).
\end{align*}
Here the nonlinear term vanishes formally because \eqref{A3} implies
\[
2\Re(f(u)\nabla\overline{u})=\nabla(V(u)),
\]
and the integral of $\nabla(V(u))$ over $\R^d$ vanishes by the divergence theorem.

The solution also formally satisfies the pseudo-conformal conservation law
\begin{align}
  \frac12\norm{J(t)u(t)}_{L^{2}}^{2}
  +t^{2}\int_{\R^d}V(u(t))\,dx
  =
  \frac12\norm{x\phi}_{L^{2}}^{2}
  +\int_{0}^{t}s\left(\int_{\R^d}W(u(s))\,dx\right)ds,
  \label{eq:pc} \tag{PC}
\end{align}
where $J(t)=x+it\nabla$ and $W(u)=(d+2)V(u)-\frac{d}{2}V'(u)|u|$.
The operator $J(t)$ is the generator of the Galilei transform and is connected with the Galilean symmetry of equation \eqref{nls}.
Formally, this identity is obtained by pairing \eqref{nls} with $iJ(t)^2u$.
Since the expansion of $J(t)^2u$ contains the scaling (dilation) generator
$x\cdot \nabla u + du/2$, the nonlinear term can be rewritten in terms
of the potential energy and its scaling derivative:
\begin{align*}
  \Re(f(u), i J(t)^{2}u) &=  -t \frac{d}{d \lambda} \left. \int_{\R^{d}} V \left( \lambda^{d/2} u(\lambda x)\right) dx \right|_{\lambda=1}
  -t^{2} \Im (f(u) , \Delta u) \\
  &= -t \left( \frac{d}{2} \int_{\R^{d}} V'(|u|)|u| \, dx - d \int_{\R^{d}} V(u) \, dx  \right) - t^{2} \frac{d}{dt} \int_{\R^{d}} V(u) \, dx.
\end{align*}
For the details, see Section \ref{sec:auxiliary_identities} below.
On the other hand, the operator $i\partial_t+\frac12\Delta$ commutes with $J(t)$.
Therefore, \eqref{eq:pc} is formally obtained from
\begin{align*}
0 &= 2\Re \left( i \partial_{t} u + \frac{1}{2}\Delta u - f(u), iJ(t)^{2}u \right) \\
&= \frac{d}{dt}\norm{J(t)u}_{L^{2}}^2
+2t \left( \frac{d}{2} \int_{\R^{d}} V'(|u|)|u| \, dx - d \int_{\R^{d}} V(u) \, dx  \right) +2 t^{2} \frac{d}{dt} \int_{\R^{d}} V(u) \, dx.
\end{align*}

For the virial identity
\begin{align}
    \norm{xu(t)}_{L^2}^2 = \norm{x\phi}_{L^2}^2
    + 2t \Im (\nabla u(t),xu(t)) + 2t^2 E(\phi)
    - 2 \int_0^t \int_0^s \int_{\R^n} W(u(\tau))\, dx d\tau ds,
    \label{virial}
    \tag{V}
\end{align}
pairing \eqref{nls} with $i|x|^{2}u$ and taking the real part, we obtain
\begin{align}
  0 &=
  2\Re\left(i\partial_t u+\frac12\Delta u-f(u),\, i|x|^{2}u\right)
  =
  \frac{d}{dt}\norm{xu(t)}_{L^{2}}^{2}
  -2\Im(\nabla u(t),xu(t)).
  \label{eq:vi1}
\end{align}
Similarly, pairing \eqref{nls} with $x\cdot\nabla u+du/2$
and taking the real part yields
\begin{align}
\begin{aligned}
  0 &= 2 \Re\left(i\partial_t u+\frac12\Delta u-f(u), x\cdot\nabla u+ \frac{d}{2}u \right) \\
  &= -\frac{d}{dt}\Im(xu(t),\nabla u(t))
    +\norm{\nabla u(t)}_{L^{2}}^{2}
    -\int_{\R^d}W(u(t))\,dx.
\end{aligned}
  \label{eq:vi2}
\end{align}
Combining \eqref{eq:vi1} and \eqref{eq:vi2} with the conservation of energy,
we formally obtain \eqref{virial}.

\eqref{eq:pc} and \eqref{virial}
are closely related to the Galilean and scaling symmetries of the Schr\"odinger equation.
The Hamiltonian structure in \eqref{A3} allows the nonlinear terms to be
expressed through the real-valued quantity $W(u)$.
In the $L^2$-critical case $f(u)= \pm |u|^{4/d}u$, one has
$W(u)\equiv 0$, so that \eqref{eq:pc} becomes an exact
conservation law.

However, the above computations are only formal.
To justify them rigorously, we need sufficiently smooth and sufficiently integrable solutions.
For instance, the derivation of the energy and momentum conservations requires
$H^{2}$-regularity, whereas the derivation of \eqref{eq:pc} and the virial-type identities require $H^{2} \cap \mathcal{F}H^{2}$-regularity.
Thus, for low-regularity solutions such as $L^{2}$-, $H^{s}$-, $H^{1}$-, or
$\Sigma$-solutions, where $\Sigma \coloneqq H^{1} \cap \mathcal{F}H^{1}$, these
formal computations are not directly meaningful, and additional arguments are
needed to justify these laws and identities.

There are two standard ways to justify the above formal computations.

\begin{enumerate}
\item \textbf{Approximation by smooth solutions.}
Using continuous dependence on the initial data, one approximates the given
solution by sufficiently smooth solutions (cf. \cites{Kato87, Kato89}).

\item \textbf{Regularization of the equation.}
One introduces a regularized equation
(for instance by Yosida approximation
\cites{GV79, Tsutsumi87})
or a regularizing spatial weight such as $e^{-\varepsilon|x|^{2}}$
to recover spatial integrability (cf.
\cite[\S6.5]{Cazenave}).
\end{enumerate}
Both approaches rely on approximation procedures followed by delicate
limiting arguments.

An observation due to Ginibre is also relevant here.
In the $H^{1}$-supercritical case, compactness methods yield
global weak solutions satisfying the energy inequality
\[
E(u(t)) \le E(u_0).
\]
However, neither uniqueness nor energy conservation is known in that
setting.

\textit{Uniqueness would imply energy conservation by the previous
inequality and the time-reversal invariance of the equation}
\cite[p.~99]{Ginibre97}.

This remark suggests that the justification of conservation laws is
closely related to the uniqueness theory for the corresponding
solution class.
This also motivates the perspective adopted in this paper.

In this paper, we revisit an alternative approach initiated by the third
author in \cite{Ozawa06} and further developed in \cites{FM17, IMMO25}.
This approach derives the above conservation laws and identities directly from the integral equation associated with \eqref{nls}, without introducing approximation sequences or auxiliary regularization.
The key idea is to exploit the space-time integrability provided by Strichartz estimates and to work entirely at the level of the integral equation.
We also note that related integral-equation arguments already appeared in
earlier work on nonlinear Schr\"odinger equations with singular interactions
\cite{AT01}, and have more recently been used in
the derivation of
weak formulations for certain quantum hydrodynamic models
\cites{A21,AMS24,S25}.
These examples suggest that such arguments are relevant beyond the present setting.

More precisely, in Proposition \ref{prop:main}, we establish an
integral identity for the inner product involving solutions to the integral equation.
This identity may be viewed as a rigorous integral-equation counterpart to
the formal pairing identities at the differential-equation level.
We refer to this identity as the master identity.
From this identity, we systematically derive
\begin{itemize}
\item conservation of charge,
\item conservation of energy,
\item conservation of momentum,
\item pseudo-conformal conservation law,
\item virial-type identities.
\end{itemize}
Thus these laws and identities all follow from the master identity.

This paper is organized as follows.
Section \ref{sec:prelim} recalls the notion of solutions and Strichartz estimates, and develops several auxiliary identities and lemmas, most of which are independent of equation \eqref{nls}.
Section \ref{sec:unified_identity} presents the master identity and uses it to prove the conservation of charge, energy, and momentum, as well as the pseudo-conformal conservation law and the virial-type identities.

\subsection*{Notation}
We write $L^p = L^p(\R^d)$ for the Lebesgue space on $\R^{d}$, and $L^\rho(I; X)$
for the Bochner space on an interval $I$ with values in a Banach space $X$.
$\mathcal{F}[u] = \widehat{u}$ is the usual Fourier transform of a function $u$ on $\R^{d}$, and $\mathcal{F}^{-1}[u] = \check{u}$ is its inverse.
For $s \in \R$ and $r \in (1, \infty)$, we denote the Bessel potential space on $\R^{d}$ by
\begin{align*}
H^{s, r} &= H^{s, r}(\R^{d}) \coloneqq \set*{ u \in \mathscr{S}'(\mathbb{R}^{d}) \given \norm{u}_{H^{s,r}} \coloneqq \norm{(1- \Delta)^{\frac{s}{2}} u}_{L^r} < \infty },
\end{align*}
where $\mathscr{S}'(\R^{d})$ stands for the space of tempered distributions on $\R^{d}$, and
\[
  (1- \Delta)^{\frac{s}{2}} = \mathcal{F}^{-1} \langle \xi \rangle^{s} \mathcal{F}, \qquad
  \langle \xi \rangle \coloneqq (1+|\xi|^2)^{1/2}
\]
is the Bessel potential operator. We denote $H^{s} = H^{s, 2}$ for $s \in \R$; in particular, $H^{0} = L^{2}$.
The weighted Sobolev space is also defined by
$\mathcal{F}H^{s} = \mathcal{F}H^{s}(\R^{d}) \coloneqq \{ u \in L^{2}(\mathbb{R}^d)
\mid \norm{u}_{\mathcal{F}H^{s}} \coloneqq \norm{\ev{x}^{s}u}_{L^2} < \infty \}$ for $s \ge 0$.
We set $\Sigma = H^{1} \cap \mathcal{F} H^{1}$.

For a Banach space $X$ and its dual $X'$, the duality pairing is denoted by $\ev{\cdot, \cdot}$. More precisely, if $X = H^{s,r'}(\R^{d})$ with $s \ge 0$ and $1 < r < \infty$, then $X' = H^{-s, r}(\R^{d})$ and the pairing $\ev{f, g}$ for $f \in X$ and $g \in X'$ (or vice versa) is defined by
\begin{align*}
\ev{f, g}
&= \int_{\R^{d}} (1- \Delta)^{\frac{s}{2}} f(x) \cdot (1- \Delta)^{-\frac{s}{2}} g(x)\, dx.
\end{align*}
In particular, if $r=2$, then $X = H^{s}(\R^{d})$, $X' = H^{-s}(\R^{d})$, and $\ev{f, g} = ( \hat{f}, \overline{\hat{g}})$, where $(\cdot, \cdot)$ is the inner product in $L^{2}(\R^{d})$.

For two Banach spaces $X$ and $Y$, we denote by $X + Y$ the Banach space consisting of all elements $f$ such that $f = f_{1} + f_{2}$ with $f_{1} \in X$ and $f_{2} \in Y$, equipped with the norm
$\norm{f}_{X + Y} = \inf_{f = f_{1} + f_{2}} \left( \norm{f_{1}}_{X} + \norm{f_{2}}_{Y} \right)$.
The space $X \cap Y$ is equipped with the norm $\norm{f}_{X \cap Y} = \norm{f}_{X} + \norm{f}_{Y}$.
Since $(X+Y)' = X' \cap Y'$, the duality pairing on $(X' \cap Y') \times (X+Y)$ is defined by
\[
    \ev{f, g}_{(X' \cap Y') \times (X+Y)}
    = \ev{f, g_{1}}_{X' \times X} + \ev{f, g_{2}}_{Y' \times Y}
\]
for $f \in X' \cap Y'$ and $g = g_{1} + g_{2} \in X + Y$.
Remark that $\ev{f, g}_{(X' \cap Y') \times (X+Y)}$ is independent of the decomposition $g = g_{1} + g_{2}$.

A pair of exponents $(\rho, \gamma)$ is said to be admissible if
\begin{align*}
\rho, \gamma \in [2, \infty],\quad
\frac{2}{\rho} &= d\left(\frac12-\frac{1}{\gamma} \right), \quad (d,\rho, \gamma) \neq (2,2,\infty).
\end{align*}

\section{Preliminaries} \label{sec:prelim}

\subsection{Definition of solutions}

Let $U(t)=e^{it\Delta/2}$ denote the free Schr\"odinger group.
Motivated by the Duhamel formula, we define solutions to \eqref{nls}.

\begin{definition}[Solution]
Let $s \ge 0$.
Let $I \ni 0$ be an interval, and fix $t_{0} \in I$.
We say that a function $u \colon I \times \R^d \to \C$ is an $H^{s}$-solution
to \eqref{nls} on $I$ if there exists an admissible pair $(\rho, \gamma)$ such that
\[
  u \in C(I;H^{s}(\R^d)) \cap L^{\rho}(I; H^{s,\gamma}(\R^d)),
\]
and
\begin{align} \label{eq:01}
u(t)
= U(t-t_{0})u(t_{0}) - i \int_{t_{0}}^{t} U(t-s)f(u(s))\,ds
\end{align}
holds in $H^{s}(\R^d)$ for every $t \in I$.
Moreover, $u$ is called a $\Sigma$-solution if $u$ is an $H^{1}$-solution and satisfies
\[
  J(\cdot)u \in C(I;L^{2}(\R^d)) \cap L^{\rho}(I;L^{\gamma}(\R^d)).
\]
\end{definition}

Thus, solutions to \eqref{eq:01} are understood as mild solutions satisfying the integral equation \eqref{eq:01}.
For the arguments below, we work with the canonical admissible pair
\begin{align*}
  (q,r)=\left( \frac{4(p+1)}{d(p-1)},\, p+1 \right),
\end{align*}
for which the nonlinear estimates required for local well-posedness hold.

\subsection{Strichartz estimates and well-posedness}

We first recall the Strichartz estimates.

\begin{lemma}[Strichartz estimates; see, e.g., \cites{S77,GV85,Y87,KT98}]
Let $(\rho,\gamma)$ and $(\widetilde{\rho},\widetilde{\gamma})$ be admissible
pairs. Then, for any interval $I \ni 0$,
\begin{align*}
\norm{U(\cdot)f}_{L^{\rho}(\R;L^{\gamma}(\R^d))}
&\le C_{0}\norm{f}_{L^{2}(\R^d)}, \\
\norm*{\int_{0}^{\cdot} U(\cdot-s)F(s)\,ds}_{L^{\rho}(I;L^{\gamma}(\R^d))}
&\le C_{0}\norm{F}_{L^{\widetilde{\rho}'}(I;L^{\widetilde{\gamma}'}(\R^d))},
\end{align*}
where $C_{0}>0$ is independent of $I$, and $\eta'$ denotes the dual exponent
of $\eta \ge 1$.
\end{lemma}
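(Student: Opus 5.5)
The plan follows the standard route: dispersive decay, interpolation, and the $TT^*$ argument, with the Christ--Kiselev lemma for the non-endpoint retarded estimate and Keel--Tao for the endpoint.

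\textbf{Step 1: dispersive estimate.} Write $U(t)$ as convolution with the kernel $(2\pi i t)^{-d/2}e^{i|x|^2/(2t)}$. This gives
\[
\norm{U(t)f}_{L^\infty}\le (2\pi|t|)^{-d/2}\norm{f}_{L^1},
\]
while unitarity gives $\norm{U(t)f}_{L^2}=\norm{f}_{L^2}$. Riesz--Thorin interpolation then yields
\[
\norm{U(t)f}_{L^\gamma}\le C|t|^{-d(\frac12-\frac1\gamma)}\norm{f}_{L^{\gamma'}}, \qquad 2\le\gamma\le\infty.
\]

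\textbf{Step 2: homogeneous estimate, non-endpoint case ($\rho>2$).} Use $TT^*$: the bound for $T f=U(\cdot)f$ from $L^2$ to $L^\rho L^\gamma$ is equivalent to the boundedness of
\[
TT^*F(t)=\int_{\R} U(t-s)F(s)\,ds
\]
from $L^{\rho'}L^{\gamma'}$ to $L^\rho L^\gamma$. By Step 1 this operator is pointwise in time dominated by the convolution of $\norm{F(s)}_{L^{\gamma'}}$ with $|t-s|^{-2/\rho}$, using the admissibility relation $d(\frac12-\frac1\gamma)=\frac2\rho$. The one-dimensional Hardy--Littlewood--Sobolev inequality, valid since $0<2/\rho<1$, then gives the bound.

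\textbf{Step 3: inhomogeneous estimate.} For two different pairs, factor the untruncated operator $\int_\R U(t-s)F(s)\,ds = T\,\widetilde T^{*}F$ and compose the homogeneous bound with its dual. To pass to the retarded integral $\int_0^t$, apply the Christ--Kiselev lemma, which requires $\widetilde\rho'<\rho$. This holds except when both pairs are the endpoint. Restricting to an interval $I$ is harmless: extend $F$ by zero outside $I$, so $C_0$ does not depend on $I$.

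\textbf{Main obstacle: the endpoint $(\rho,\gamma)=(2,\frac{2d}{d-2})$ for $d\ge3$.} Here HLS fails and Christ--Kiselev does not apply. I would invoke the Keel--Tao argument. Decompose the bilinear form $\iint\langle U(s)^*F(s),U(t)^*G(t)\rangle\,ds\,dt$ dyadically in $|t-s|\sim2^j$. Prove bounds for each dyadic piece over a range of exponents near $(\gamma',\gamma)$ by interpolating between the dispersive estimate and an energy estimate. Then sum the pieces via bilinear real interpolation, obtaining Lorentz-space gains that close the sum at the endpoint; the retarded version follows by the same decomposition restricted to $s<t$. This step is the technical heart, and I would cite \cite{KT98} rather than reproduce it. The excluded case $(d,\rho,\gamma)=(2,2,\infty)$ is exactly where this argument breaks down.
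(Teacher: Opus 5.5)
Your outline is correct and matches the standard argument in the works \cites{S77,GV85,Y87,KT98}, which the paper cites without giving a proof of its own: dispersive decay, $TT^*$ with one-dimensional Hardy--Littlewood--Sobolev, Christ--Kiselev (or the classical duality/interpolation argument) for the truncated mixed-pair estimate, and Keel--Tao at the double endpoint. Two details need adding: the pair $(\infty,2)$, where $2/\rho=0$ lies outside Hardy--Littlewood--Sobolev, follows directly from unitarity; and for $t<0$ the integral $\int_0^t$ is an advanced rather than a retarded one, which is handled the same way after time reversal.
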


For well-posedness results for \eqref{nls}, we refer to \cite{Kato87} for $H^{1}$-solutions, \cites{CW92,GOV94} for $\Sigma$-solutions, and \cite{Cazenave} for a comprehensive account.
For $H^{s}$-solutions with $0<s<1$, local well-posedness was first established by Cazenave and Weissler \cite{CW90} using Besov-type auxiliary spaces, and later refined by Kato \cites{Kato95,Kato96} using Bessel potential spaces, which are also used in the treatment of the momentum $P(u)$ below.



\subsection{Properties of solutions}

We first consider a general property of $H^s$-solutions to \eqref{nls}.
Let $0 \leq s \leq 1$ and let $I=[-T,T]$ be a bounded interval.
Choose an admissible pair $(\rho,\gamma)$ associated with the local theory so that
\[
u \in C(I;H^{s}) \cap L^{\rho}(I;H^{s,\gamma}),
\qquad
f(u)\in L^{\infty}(I;H^{s}) + L^{\rho'}(I;H^{s,\gamma'}).
\]
Since $I$ is bounded, it follows that
\begin{align} \label{eq:f_reg}
f(u)
\in L^{1}(I;H^{s}) + L^{\rho'}(I;H^{s,\gamma'}).
\end{align}
Moreover, for $g \in \mathscr{S}(\R^d)$, where $\mathscr{S}(\R^d)$ denotes the Schwartz space on $\R^d$,
\begin{align*}
\Re(\nabla g,f(g))
= \frac12 \int_{\R^d} \nabla(V(g))\,dx = 0,
\end{align*}
where we used \eqref{A3} and the divergence theorem.
Consequently, if $1/2 \le s \le 1$, then
\begin{align} \label{eq:density_cancel}
\Re \ev*{\nabla u(t), \overline{f(u(t))}} = 0
\end{align}
for almost every $t \in I$.
Here the duality pairing in \eqref{eq:density_cancel} is understood on
$(H^{-1/2}\cap H^{-1/2,\gamma}) \times (H^{1/2}+H^{1/2,\gamma'})$
(or on $L^{2}\times L^{2}$ if $s=1$).
This follows from a standard density argument.

We now return to the fixed pair
\[
  (q,r)=\left(\frac{4(p+1)}{d(p-1)},\,p+1\right),
\]
which is used below in the $H^1$- and $\Sigma$-theory.
As observed in \cite{IMMO25}, such solutions satisfy the following additional regularity properties for every bounded interval $I=[-T,T]$.
First, by \eqref{eq:f_reg} with $s=1$ and
the identity
\[
  J(t) \coloneqq x+it\nabla = e^{\frac{i|x|^{2}}{2t}} (it\nabla) e^{-\frac{i|x|^{2}}{2t}},
\]
we have
\begin{align*}
  f(u) &\in L^1(I;H^{1}) + L^{q'}(I;H^{1,r'}), \\
  J(\cdot)f(u) &\in L^1(I;L^{2}) + L^{q'}(I;L^{r'}).
\end{align*}
Since $xu = J(t)u-it\nabla u \in L^{2}$ for every $t$, any $\Sigma$-solution
also satisfies
\begin{align}
\begin{aligned}
  xu &\in C(I;L^{2}) \cap L^{q}(I;L^{r}), \\
  xf(u) &\in L^1(I;L^{2}) + L^{q'}(I;L^{r'}).
\end{aligned}
\label{reg:4}
\end{align}
Moreover, every $H^{1}$-solution satisfies
\begin{align*}
  f(u),\ \Delta u,\ \partial_t u
  \in C(I;H^{-1}) \cap L^{q}(I;H^{-1,r}).
\end{align*}
Indeed, the restriction on $p$ in \eqref{p:con} implies that
$H^{1} \hookrightarrow L^{r}$ and hence $L^{r'} \hookrightarrow H^{-1}$, so that
$f(u)\in C([-T,T];H^{-1})$.
Combining this with the fact that $U(t)$ is a strongly continuous unitary group
on $H^{-1}$, the integral equation \eqref{eq:01} implies that $u$ is strongly
differentiable in $t$ as an $H^{-1}$-valued function and satisfies
\eqref{nls} in $H^{-1}$.
Furthermore, by \eqref{p:con}, we have
$H^{1,r'} \hookrightarrow L^{2} \hookrightarrow H^{-1,r}$.
Hence, by $H^{1}\hookrightarrow L^{r}$ and H\"older's inequality,
$f(u)\in L^{q}(I;H^{-1,r})$, and therefore
\[
  \partial_t u \in C(I;H^{-1}) \cap L^{q}(I;H^{-1,r}).
\]

\subsection{Auxiliary identities for the scaling (dilation) structure}
\label{sec:auxiliary_identities}

For a function $v$, set
\[
  v_\lambda(x)=\lambda^{d/2}v(\lambda x)
\]
for $\lambda > 0$.
Then
\[
  \left.\frac{d}{d\lambda} v_\lambda \right|_{\lambda=1}
  =
  \left(x\cdot\nabla+\frac d2\right)v.
\]
Accordingly, we define
\[
\mathcal{V}(v)=\int_{\R^d}V(v)\,dx,
\qquad
\mathcal{D}_{\mathrm{sc}}(v)
=
\left.\frac{d}{d\lambda}
\mathcal{V}(v_\lambda) \right|_{\lambda=1}.
\]
Thus, $\mathcal{D}_{\mathrm{sc}}(v)$ is the derivative of the potential energy
along the $L^2$-preserving scaling (dilation) generated by $x\cdot\nabla+d/2$.
Equivalently,
\[
  \mathcal{D}_{\mathrm{sc}}(v)
  = \left.\frac{d}{d\lambda} \int_{\R^d} \lambda^{-d} V(\lambda^{d/2} v(x))\, dx \right|_{\lambda=1}
=
-d\int_{\R^d}V(v)\,dx
+
\frac d2\int_{\R^d}V'(|v|)|v|\,dx.
\]
We also define
\begin{align}
  \mathcal{W}(v)=2\mathcal{V}(v)-\mathcal{D}_{\mathrm{sc}}(v).
  \label{eq:W_def}
\end{align}

The key point is that the same scaling generator appears both the potential
energy and the $J$-interactions. Indeed,
\begin{align*}
(\nabla\cdot x + x\cdot\nabla)v
&=
2\left(x\cdot\nabla+\frac d2\right)v
=
2\left.\frac{d}{d\lambda} v_\lambda \right|_{\lambda=1},
\\
J(s)^2v
&=
|x|^2v+is(\nabla\cdot x+x\cdot\nabla)v-s^2\Delta v.
\end{align*}
Remark that it follows from \eqref{A3} that
\begin{align*}
\Re \ev*{(\nabla\cdot x + x\cdot\nabla)v,\overline{f(v)}}
&= \mathcal{D}_{\mathrm{sc}}(v).
\end{align*}
Using these identities,
together with a standard density argument, we obtain
the following formulas.
If
$v \in H^{1} \cap H^{1,r}$ and $xv \in L^{2} \cap L^{r}$, then,
for each $s \in \R$,
\begin{align}
  \Im \ev*{J(s) v,\overline{J(s) f(v)}}
  &=
  s\,\mathcal{D}_{\mathrm{sc}}(v)
  +
  s^2\Im \ev*{\nabla v,\overline{\nabla f(v)}},
  \label{eq:J_pseudo_algebra}
  \\
  \Re \ev*{\nabla v,\overline{J(s) f(v)}}
  -
  \Re \ev*{J(s) v,\overline{\nabla f(v)}}
  &=
  \mathcal{D}_{\mathrm{sc}}(v)
  +
  2s\Im \ev*{\nabla v,\overline{\nabla f(v)}}.
  \label{eq:J_cross_algebra}
\end{align}
All duality pairings above are understood on
$(L^{2} \cap L^{r}) \times (L^{2} + L^{r'})$.
We use these identities only through the integrated formulas stated below.

Lemma \ref{lem:im_grad} is the only place where the equation \eqref{nls} is
used explicitly.
\begin{lemma}[Lemma 3 in \cite{IMMO25}]
\label{lem:im_grad}
Let $u$ be an $H^{1}$-solution to \eqref{nls}. Then
\begin{align*}
\Im \ev*{ \nabla u, \overline{\nabla f(u)} }_{(L^{2} \cap L^{r}) \times (L^{2} + L^{r'})}
=
2 \Re \ev*{\partial_{t} u, \overline{f(u)}}_{(H^{-1} \cap H^{-1, r}) \times (H^{1} + H^{1, r'})}.
\end{align*}
\end{lemma}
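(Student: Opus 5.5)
The plan is to use the equation in $H^{-1}$, namely $\partial_t u = \frac{i}{2}\Delta u - i f(u)$ (valid for every $H^{1}$-solution, as shown above), and substitute it into the right-hand side. This gives
\[
2\Re \ev*{\partial_t u,\overline{f(u)}}
= \Re \ev*{i\Delta u,\overline{f(u)}} - 2\Re \ev*{i f(u),\overline{f(u)}} .
\]
The last term should vanish: $\ev{f(u),\overline{f(u)}}=\int |f(u)|^2\,dx$ is real, so $\Re(i\cdot \text{real})=0$. The pairing is well defined because $f(u)\in L^{r'}\cap H^{1}+\dots$. More carefully, $f(u)(t)\in H^{1}+H^{1,r'}$ and also $f(u)(t)\in H^{-1}\cap H^{-1,r}$, and by the embeddings $L^{r'}\hookrightarrow H^{-1}$ and $H^{1}\hookrightarrow L^{r}$ the pairing reduces to an honest integral $\int f(u)\overline{f(u)}\,dx$. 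For that, I will check that $f(u)\in L^{2}+L^{r'}$ pairs against itself. At a fixed time $t$ where $u(t)\in H^{1}\cap H^{1,r}$ (a.e. $t$), the growth bound (A1) gives $|f(u)|\lesssim |u|+|u|^{p}$, hence $f(u)\in L^{2}\cap L^{r'}$, and everything is a genuine integral.

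For the remaining term I will integrate by parts in the duality sense: $\ev{\Delta u,\overline{f(u)}} = -\ev{\nabla u,\overline{\nabla f(u)}}$. This holds because $\nabla u(t)\in L^{2}\cap L^{r}$ and $\nabla f(u)(t)\in L^{2}+L^{r'}$ for a.e. $t$. The latter follows from $f(u)\in H^{1}+H^{1,r'}$, via the chain rule $\nabla f(u)=\partial_z f\,\nabla u+\partial_{\bar z}f\,\nabla\bar u$ and (A1) with Hölder. The identity is justified by approximating $u(t)$ by Schwartz functions in $H^{1}\cap H^{1,r}$ and using that $\Delta:H^{1}\cap H^{1,r}\to H^{-1}\cap H^{-1,r}$ is continuous, together with the definition of the Bessel-potential pairing. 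Then
\[
\Re\ev*{i\Delta u,\overline{f(u)}} = -\Re\bigl(i\ev*{\nabla u,\overline{\nabla f(u)}}\bigr) = \Im\ev*{\nabla u,\overline{\nabla f(u)}},
\]
which is exactly the left-hand side.

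The main obstacle is bookkeeping of the function spaces: making sure each pairing is taken in a space where it makes sense, and that splitting $\ev{\partial_t u,\cdot}$ into the two pieces respects the sum/intersection pairing convention from the Notation section. The identity is meant for a.e. $t\in I$, or as an identity of $L^1_{\mathrm{loc}}$ functions of $t$. I would first fix $t$ with $u(t)\in H^{1,r}$ and then verify that $\Delta u(t)$ and $f(u(t))$ lie in $H^{-1}\cap H^{-1,r}$, while $f(u(t))\in H^{1}\cap H^{1,r'}$. With this, the pairing is linear in the first slot and the substitution is legitimate. No density argument in time is needed, since the equation holds pointwise in $t$ in $H^{-1}$.
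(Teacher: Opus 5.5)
Your proposal is correct and takes the route the paper relies on. The paper only cites this lemma, but it notes that this is the only place where the equation is used: you substitute $\partial_t u=\frac{i}{2}\Delta u-if(u)$ in $H^{-1}$, observe that $\Re\ev{if(u),\overline{f(u)}}=0$, and integrate by parts, $\ev{\Delta u,\overline{f(u)}}=-\ev{\nabla u,\overline{\nabla f(u)}}$, at a.e.\ fixed $t$.

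One small correction: the claims $f(u(t))\in L^{r'}$ and $f(u(t))\in H^{1}\cap H^{1,r'}$ fail in general, e.g.\ when $f$ has a linear part such as $f(z)=z+|z|^{p-1}z$. You only need $f(u(t))\in H^{1}+H^{1,r'}$, and $f(u(t))\in L^{2}$ then follows from $H^{1,r'}\hookrightarrow L^{2}$.
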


Fix a bounded interval $I=[-T,T]$.
We next state a potential-energy lemma.
The first identity \eqref{eq:potential_time_derivative} was already proven in \cite{IMMO25}.
Since $t\mapsto \mathcal{V}(v(t))$ belongs to $W^{1,1}(I)$, the weighted
identity \eqref{eq:potential_weighted_integration} follows immediately from
integration by parts.

\begin{lemma}[Potential-energy calculus]
Assume that $f$ satisfies \eqref{A1} and \eqref{A3}.
Let $v \in C(I; H^{1}) \cap C^{1}(I;H^{-1})$ with
$\nabla v \in L^{q}(I; L^{r})$ and
$\partial_{t}v \in L^{q}(I; H^{-1,r})$.
Then $t\mapsto \mathcal{V}(v(t))$
belongs to $W^{1,1}(I)$, and
\begin{align}
\frac{d}{dt}\mathcal{V}(v(t))
=
2\Re \ev*{\partial_t v(t),\overline{f(v(t))}}_
{(H^{-1}\cap H^{-1,r})\times (H^{1}+H^{1,r'})}
\label{eq:potential_time_derivative}
\end{align}
holds in $L^{1}(I)$.
Moreover, for any $t\in I$ and any integer $k\ge 1$,
\begin{align}
  2\int_0^t s^k
  \Re \ev*{\partial_t v(s),\overline{f(v(s))}}\,ds
  = t^k\mathcal{V}(v(t))
  - k \int_0^t s^{k-1}\mathcal{V}(v(s))\,ds.
\label{eq:potential_weighted_integration}
\end{align}
\end{lemma}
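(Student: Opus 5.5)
The identity \eqref{eq:potential_time_derivative} is taken from \cite{IMMO25}. The plan is to recall its proof in outline and then derive \eqref{eq:potential_weighted_integration} from it by integration by parts in $W^{1,1}(I)$.

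\emph{Step 1: the chain rule at fixed time.} For \eqref{eq:potential_time_derivative} I would work at the level of difference quotients. By \eqref{A3} and the fundamental theorem of calculus along the segment joining $v(t)$ and $v(t+h)$, for each $x$,
\[
V(v(t+h))-V(v(t)) = 2\Re\int_0^1 \overline{f\bigl(v(t)+\theta(v(t+h)-v(t))\bigr)}\,\bigl(v(t+h)-v(t)\bigr)\,d\theta .
\]
The assumption $v\in C(I;H^1)$ and the growth bound in \eqref{A1} give $f(v_\theta)\in H^1+H^{1,r'}$ uniformly in $\theta$ and continuous in $h$. Here $v_\theta$ denotes the convex combination. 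Meanwhile $(v(t+h)-v(t))/h\to\partial_t v(t)$ in $H^{-1}$, and in $L^q(I;H^{-1,r})$ after time averaging.

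\emph{Step 2: passing to the limit.} I would integrate the difference quotient against a test function of $t$ and let $h\to0$. The pairing of $(H^{-1}\cap H^{-1,r})$ with $(H^1+H^{1,r'})$ is controlled by H\"older in time, using $\nabla v\in L^q(I;L^r)$ and $\partial_t v\in L^q(I;H^{-1,r})$. This produces the weak derivative and shows it lies in $L^1(I)$.

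The main obstacle in this step is the nonlinear part of $f(v_\theta)$ in $H^{1,r'}$. For that part one estimates $\nabla f(v_\theta)$ by $|v_\theta|^{p-1}|\nabla v_\theta|$ in $L^{r'}$. This requires $\nabla v\in L^q(L^r)$ and $v\in L^\infty(L^r)$, the latter from Sobolev, which is where $p\le 1+4/(d-2)$ enters. One also needs the $L^1(I)$ bound
\[
\norm{v}_{L^\infty L^r}^{p-1}\,\norm{\nabla v}_{L^q L^r}\,\norm{\partial_t v}_{L^q H^{-1,r}} ,
\]
which holds since $2/q\le1$ on a bounded interval. The continuity in $\theta$ and $h$ needed for dominated convergence would use continuity of the Nemytskii map $f\colon H^1\to H^1+H^{1,r'}$, obtained by the usual subsequence argument.

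\emph{Step 3: the weighted identity.} Since $\mathcal V(v(\cdot))\in W^{1,1}(I)$, it is absolutely continuous, and $s^k$ is smooth. Hence $s^k\mathcal V(v(s))\in W^{1,1}$ with
\[
\frac{d}{ds}\bigl(s^k\mathcal V(v(s))\bigr)=ks^{k-1}\mathcal V(v(s))+s^k\frac{d}{ds}\mathcal V(v(s)).
\]
Integrating from $0$ to $t$, the boundary term at $0$ vanishes because $k\ge1$. Substituting \eqref{eq:potential_time_derivative} gives exactly \eqref{eq:potential_weighted_integration}.
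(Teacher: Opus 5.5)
Your proposal is correct and follows the paper's route. The paper also takes \eqref{eq:potential_time_derivative} from \cite{IMMO25}, and it obtains \eqref{eq:potential_weighted_integration} exactly as in your Step~3: the product rule for $s^k\mathcal{V}(v(s))$ in $W^{1,1}(I)$, with the boundary term at $s=0$ vanishing.

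One caveat concerns your outline of the cited part. The membership $f(v_\theta)\in H^1+H^{1,r'}$, and its continuity in $h$, hold only in the space-time sense. That is, they hold for a.e.\ $t$, using $\nabla v\in L^q(I;L^r)$ and the continuity of time translations in $L^q(I;L^r)$. They do not follow at each fixed time from $v\in C(I;H^1)$ alone: for $d\ge3$ and larger $p$, $f$ does not map $H^1$ into $H^1+H^{1,r'}$. Your Step~2 already uses the correct space-time bounds, so this only needs rephrasing.
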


The following integrated identities are obtained from the above auxiliary identities involving $\mathcal{D}_{\mathrm{sc}}(v)$ together with the potential-energy calculus.
\begin{lemma}[Integrated identities involving the $J$-terms]
\label{lem:integrated_J_terms}
Let $u$ be a $\Sigma$-solution to \eqref{nls}. Then, for any $t\in I$,
\begin{align}
  \int_0^t
  \Im \ev*{J(s)u(s),\overline{J(s)f(u(s))}}\,ds
  =
  t^2\mathcal{V}(u(t))
  -
  \int_0^t s\,\mathcal{W}(u(s))\,ds.
  \label{eq:integrated_pseudo_term}
\end{align}
Moreover,
\begin{align}
\begin{aligned}
  &\int_{0}^{t}
  \left(
  \Re \ev*{\nabla u(s),\overline{J(s)f(u(s))}}
  -
  \Re \ev*{J(s)u(s),\overline{\nabla f(u(s))}}
  \right)ds \\
  ={}&
  -\int_0^t \mathcal{W}(u(s))\,ds
  +
  2t\mathcal{V}(u(t)).
\end{aligned}
  \label{eq:integrated_cross_term}
\end{align}
\end{lemma}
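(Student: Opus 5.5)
We prove both integrated identities by integrating the pointwise algebraic formulas \eqref{eq:J_pseudo_algebra} and \eqref{eq:J_cross_algebra} in time. The time-derivative terms are then converted into potential-energy terms using Lemma \ref{lem:im_grad} and the potential-energy calculus.

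\textbf{Pointwise identities.} For a $\Sigma$-solution we have $u(s)\in H^1\cap H^{1,r}$ and $xu(s)\in L^2\cap L^r$ for almost every $s\in I$, by \eqref{reg:4} and $u\in L^q(I;H^{1,r})$. Hence \eqref{eq:J_pseudo_algebra} and \eqref{eq:J_cross_algebra} hold with $v=u(s)$ for a.e.\ $s$.

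Each side is integrable on $[0,t]$. The pairings are dominated by products of $\norm{J u}_{L^2\cap L^r}$ (or $\norm{\nabla u}_{L^2\cap L^r}$) with $\norm{Jf(u)}_{L^2+L^{r'}}$ (or $\norm{\nabla f(u)}_{L^2+L^{r'}}$). H\"older's inequality in time, with $J u\in L^\infty L^2\cap L^qL^r$ and $Jf(u)\in L^1L^2+L^{q'}L^{r'}$, then gives $L^1$ bounds. The same bounds show $\mathcal{D}_{\mathrm{sc}}(u(\cdot))\in L^1(I)$.

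\textbf{Converting the $\nabla f$ term.} By Lemma \ref{lem:im_grad},
\[
\Im\ev*{\nabla u,\overline{\nabla f(u)}}=2\Re\ev*{\partial_t u,\overline{f(u)}}.
\]
The hypotheses of the potential-energy lemma are satisfied by $u$, since $\partial_t u\in C(I;H^{-1})\cap L^q(I;H^{-1,r})$ as shown earlier. So \eqref{eq:potential_weighted_integration} applies with $k=2$ and $k=1$:
\begin{align*}
\int_0^t s^2\,\Im\ev*{\nabla u,\overline{\nabla f(u)}}\,ds &= t^2\mathcal{V}(u(t))-2\int_0^t s\,\mathcal{V}(u(s))\,ds,\\
\int_0^t 2s\,\Im\ev*{\nabla u,\overline{\nabla f(u)}}\,ds &= 2t\mathcal{V}(u(t))-2\int_0^t \mathcal{V}(u(s))\,ds.
\end{align*}

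\textbf{Assembling.} Integrating \eqref{eq:J_pseudo_algebra} in time and inserting the first identity gives
\[
\int_0^t s\big(\mathcal{D}_{\mathrm{sc}}(u)-2\mathcal{V}(u)\big)\,ds+t^2\mathcal{V}(u(t)),
\]
which is \eqref{eq:integrated_pseudo_term} by the definition \eqref{eq:W_def} of $\mathcal{W}$. Integrating \eqref{eq:J_cross_algebra} and inserting the second identity gives
\[
\int_0^t\big(\mathcal{D}_{\mathrm{sc}}(u)-2\mathcal{V}(u)\big)\,ds+2t\mathcal{V}(u(t)),
\]
which is \eqref{eq:integrated_cross_term}.

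\textbf{Main obstacle.} The delicate step is justifying the pointwise formulas \eqref{eq:J_pseudo_algebra} and \eqref{eq:J_cross_algebra} for a.e.\ $s$ at the available regularity. The paper takes these as given by density. What must be checked here is that the pairings are defined on $(L^2\cap L^r)\times(L^2+L^{r'})$ and are jointly measurable and integrable in $s$. I would handle this by approximating $u(s)$ in the norm $H^1\cap H^{1,r}\cap\{xv\in L^2\cap L^r\}$, using the continuity of $v\mapsto f(v)$ guaranteed by \eqref{A1}.
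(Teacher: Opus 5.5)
Your proposal is correct and follows the paper's own proof: integrate \eqref{eq:J_pseudo_algebra} and \eqref{eq:J_cross_algebra} in time, replace $\Im\ev*{\nabla u,\overline{\nabla f(u)}}$ by $2\Re\ev*{\partial_t u,\overline{f(u)}}$ via Lemma \ref{lem:im_grad}, apply \eqref{eq:potential_weighted_integration} with $k=2$ and $k=1$, and identify $\mathcal{W}$ through \eqref{eq:W_def}; your bookkeeping of the weights and constants checks out. Your remarks on a.e.-in-time validity and time-integrability of the pairings simply spell out what the paper leaves to its density argument and the regularity recorded in Section \ref{sec:prelim}.
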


\begin{proof}[Proof of Lemma \ref{lem:integrated_J_terms}]
Integrating \eqref{eq:J_pseudo_algebra} in time, applying
Lemma \ref{lem:im_grad}, and then using
\eqref{eq:potential_weighted_integration} for $k=2$ together with \eqref{eq:W_def}, we obtain
\eqref{eq:integrated_pseudo_term}.
Similarly, integrating \eqref{eq:J_cross_algebra} in time, applying
Lemma \ref{lem:im_grad}, and then using
\eqref{eq:potential_weighted_integration} for $k=1$ together with \eqref{eq:W_def}, we obtain
\eqref{eq:integrated_cross_term}.
\end{proof}

We conclude this section with a technical exchange lemma for Duhamel terms, which will be needed in the proof of Proposition \ref{prop:main}.

\begin{lemma}[cf. Lemma 6 in \cite{IMMO25}]
\label{lem:integral_exchange}
Let $\gamma \ge 1$ and $\rho \ge 1$. Assume that there exists an integer $\sigma \ge 0$ such that
$H^{\sigma}(\R^d) \hookrightarrow L^{\gamma}(\R^d)$ continuously.
Let $\psi \in L^{2} \cap L^\gamma$ and
$g_{j} \in L^{1}(I; L^{2}) + L^{\rho'}(I; L^{\gamma'})$ for $j=1,2$.
Then, for any $t \in I$,
\begin{align*}
\left( \psi, \int_0^t U(-s) g_{1}(s)\, ds \right)
=
\int_{0}^{t}
\ev{ U(s) \psi, \overline{g_{1}(s)} }\, ds,
\end{align*}
where the duality pairing is understood on
$(L^{2} \cap L^\gamma) \times (L^{2} + L^{\gamma'})$, and
\begin{align*}
&\left( \int_0^t U(-s) g_{1}(s)\, ds, \int_0^t U(-\theta) g_{2}(\theta)\, d\theta \right) \\
={}&
\int_{0}^{t}
\ev*{ \int_{0}^{\theta} U(\theta-s) g_{1}(s)\, ds, \overline{g_{2}(\theta)} }\, d \theta
+
\int_{0}^{t}
\ev*{g_{1}(s), \overline{\int_{0}^{s} U(s-\theta) g_{2}(\theta)\, d\theta} }\, ds,
\end{align*}
where the duality pairings are again on
$(L^{2} \cap L^\gamma) \times (L^{2} + L^{\gamma'})$.
\end{lemma}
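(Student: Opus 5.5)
The plan is to prove both identities first for nice data and then pass to the general case by density. Throughout I will use that $U(s)$ is unitary on $L^{2}$ and bounded on $H^{\sigma}$. I will also use that, for $\psi\in L^2\cap L^\gamma$, the map $s\mapsto U(s)\psi$ lies in $L^\infty(I;L^2)$. It is \emph{not} automatically in $L^\rho(I;L^\gamma)$, since $(\rho,\gamma)$ is not assumed admissible. This is why I work first with $\psi$ in a smoother class, where the embedding $H^{\sigma}\hookrightarrow L^{\gamma}$ is available.

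\textbf{Step 1: the first identity for smooth $\psi$.} Take $\psi\in H^{\sigma}$. Then $U(s)\psi\in C(I;H^\sigma)\subset C(I;L^2\cap L^\gamma)$, with norm bounded uniformly on $I$. Consequently the pairing $\ev{U(s)\psi,\overline{g_1(s)}}$ is integrable for every decomposition $g_1=a+b$ with $a\in L^1(I;L^2)$ and $b\in L^{\rho'}(I;L^{\gamma'})$. For the $L^1(I;L^2)$ part $a$, the Bochner integral $\int_0^tU(-s)a(s)\,ds$ converges in $L^2$, and the identity follows from continuity of the inner product together with unitarity, $(\psi,U(-s)a)=(U(s)\psi,a)$. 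For the $b$ part, the integral $\int_0^t U(-s)b(s)\,ds$ is interpreted through the dual Strichartz estimate in $L^2$. I will approximate $b$ by $b_n\in L^1(I;L^2)\cap L^{\rho'}(I;L^{\gamma'})$ converging in $L^{\rho'}(I;L^{\gamma'})$, for instance by truncations. The identity holds for each $b_n$. I then pass to the limit on the left by the definition of the integral and on the right by H\"older's inequality in time against $\sup_s\norm{U(s)\psi}_{L^\gamma}$.

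\textbf{Step 2: general $\psi\in L^2\cap L^\gamma$.} Approximate $\psi$ by $\psi_n\in H^\sigma$ in $L^2\cap L^\gamma$, for example by mollification and frequency cutoff; Schwartz functions are dense in $L^2\cap L^\gamma$. The left side converges because the Duhamel integral is an $L^2$ element. On the right side, the $L^1L^2$ component converges by unitarity. The $L^{\rho'}L^{\gamma'}$ component is the main obstacle: $\norm{U(s)(\psi_n-\psi)}_{L^\gamma}$ is not controlled by $\norm{\psi_n-\psi}_{L^2\cap L^\gamma}$ uniformly in $s$. I would handle this by restricting the claim to the sense in which the right side is actually defined. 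Taking the pairing on $(L^2\cap L^\gamma)\times(L^2+L^{\gamma'})$ pointwise in $s$ requires $U(s)\psi\in L^\gamma$ with $\rho$-integrability in $s$. This holds if $(\rho,\gamma)$ is admissible, by the homogeneous Strichartz estimate, and the density argument then closes using $\norm{U(\cdot)(\psi_n-\psi)}_{L^\rho L^\gamma}\le C_0\norm{\psi_n-\psi}_{L^2}$. So in the applications, where $(\rho,\gamma)=(q,r)$, I will invoke admissibility at this point.

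\textbf{Step 3: the bilinear identity.} First take $g_1,g_2\in L^1(I;H^\sigma)$, a dense class in which all integrals are $H^\sigma$-valued Bochner integrals. Write the left side as the double integral
\[
\int_0^t\int_0^t\bigl(U(\theta-s)g_1(s),g_2(\theta)\bigr)\,ds\,d\theta,
\]
and split the square into the regions $s<\theta$ and $\theta<s$. The region $s<\theta$ gives the first term on the right. The region $\theta<s$, after applying Fubini and unitarity, $(U(\theta-s)g_1,g_2)=(g_1,U(s-\theta)g_2)$, gives the second term. The diagonal has measure zero. I then extend to general $g_j$ by density in $L^1L^2+L^{\rho'}L^{\gamma'}$. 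The inhomogeneous Strichartz estimate bounds $\int_0^\theta U(\theta-s)g_1(s)\,ds$ in $L^\infty L^2\cap L^\rho L^\gamma$ by the norm of $g_1$, which is again where admissibility is used. By H\"older's inequality, both right-hand pairings are then continuous bilinear forms. The left side is continuous by the dual homogeneous estimate.
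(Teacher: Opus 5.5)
Your proof is correct. You are also right that $(\rho,\gamma)$ has to be admissible. With arbitrary $\rho,\gamma\ge1$, neither $\int_0^tU(-s)g(s)\,ds\in L^2$ nor $U(\cdot)\psi\in L^\rho(I;L^\gamma)$ is available, so the lemma only makes sense in the Strichartz setting, as in the applications with $(q,r)$. The paper's own limit $\varepsilon\downarrow0$ tacitly uses the same bounds.

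Your route differs from the paper's in how you reduce to the case $g_j\in L^1(I;L^2)$.

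\textbf{The paper's route.} It does not approximate by a dense class. It applies the fixed regularizer $(1-\varepsilon\Delta)^{-\sigma}$ to the given $g_j$. By the dual embedding $L^{\gamma'}\hookrightarrow H^{-\sigma}$, this puts them in $L^1(I;L^2)$; this is the one place where $H^\sigma\hookrightarrow L^\gamma$ genuinely enters. It then does the Fubini/unitarity computation there and lets $\varepsilon\downarrow0$. Because the regularizer commutes with $U(t)$ and is an approximate identity, it can be transferred onto the Strichartz-controlled objects $\int_0^tU(-s)g_j\,ds$ and $\int_0^\theta U(\theta-s)g_1\,ds$. The limit is then taken term by term. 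The first identity is treated the same way, by smoothing $g_1$ rather than approximating $\psi$.

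\textbf{Your route.} You prove the identities on dense classes: $H^\sigma$ for $\psi$ and $L^1(I;H^\sigma)$ for the $g_j$. You then extend by continuity. This requires two things:
\begin{enumerate}
\item the homogeneous, dual and inhomogeneous Strichartz estimates, to show that both sides are continuous (bi)linear forms on $L^1L^2+L^{\rho'}L^{\gamma'}$;
\item density of nice functions in $L^{\rho'}(I;L^{\gamma'})$, which is fine since $\rho',\gamma'\le2$.
\end{enumerate}

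\textbf{Trade-off.} Your version is longer, but it makes every use of Strichartz explicit. It also shows that the hypothesis $\psi\in L^\gamma$ is superfluous (only $\psi\in L^2$ matters) and that $H^\sigma\hookrightarrow L^\gamma$ is merely a convenience for your dense class. The paper's version is shorter and avoids any density statement in the sum space.

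A minor point: for $t<0$ the two triangles in your Step 3 exchange roles, which does not affect the final identity.
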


\begin{proof}[Proof of Lemma \ref{lem:integral_exchange}]
We prove only the second assertion, since the first is similar and simpler.
Using the Yosida approximation $(1-\varepsilon\Delta)^{-\sigma}$ and the dual embedding
$L^{\gamma'} \hookrightarrow H^{-\sigma}$,
we obtain
\[
(1-\varepsilon\Delta)^{-\sigma} g_{1},\;
(1-\varepsilon\Delta)^{-\sigma} g_{2} \in L^{1}(I;L^{2}),
\]
This implies that $U(-s)(1- \varepsilon \Delta)^{-\sigma} g_{1}(s) \overline{U(-\theta) (1- \varepsilon \Delta)^{-\sigma} g_{2}(\theta)}$
is integrable on $[0,t]^{2}\times\R^{d}$.
The conclusion follows by the unitarity of $U(t)$ on $L^{2}$, Fubini's theorem, and the limit $\varepsilon \downarrow 0$.
\end{proof}

\section{Systematic derivation of conservation laws}
\label{sec:unified_identity}

In this section, we show that the conservation laws discussed above can be derived systematically from a single integral identity.
By the regularity properties established in Section \ref{sec:prelim}, all duality pairings $\ev*{\cdot,\cdot}$ at fixed times, as well as the corresponding time integrals appearing below, are well defined.
Unless otherwise stated, these pairings are understood on
\[
(L^{2}\cap L^{r}) \times (L^{2}+L^{r'})
\]
and, for the time-integrated terms, on
\[
\left(L^{\infty}_{t}L^{2}\cap L^{q}_{t}L^{r}\right)
\times
\left(L^{1}_{t}L^{2}+L^{q'}_{t}L^{r'}\right),
\]
where $(q,r)=\left(\frac{4(p+1)}{d(p-1)},\,p+1\right)$.
Within this framework, we may work directly with the integral equations without repeatedly restating the underlying function spaces.

The following proposition is the key ingredient in our unified approach.
It provides a general identity from which all of the conservation laws and identities considered in this paper can be derived in a systematic way.

\begin{proposition} \label{prop:main}
Let $I$ be an interval containing $0$.
Let $\gamma \ge 1$ and $\rho \ge 1$, and assume that
$H^{\sigma}(\R^d)\hookrightarrow L^{\gamma}(\R^d)$ for some
integer $\sigma \ge 0$.
Suppose that $\psi_{j}\in L^{2}\cap L^{\gamma}$ and
$g_{j}\in L^{1}(I;L^{2}) + L^{\rho'}(I;L^{\gamma'})$ for $j=1,2$.
Define $v_{j}(t)$ by
\begin{align*}
  v_{j}(t)
  = U(t)\psi_{j} - i\int_{0}^{t} U(t-s)g_{j}(s)\,ds,
  \qquad t \in I.
\end{align*}
Then, for every $t \in I$,
\begin{align} \label{eq:main}
  (v_{1}(t),v_{2}(t))
  &= (\psi_{1},\psi_{2})
  + i\int_{0}^{t}
  \left(
  \ev*{v_{1}(s),\overline{g_{2}(s)}} - \ev*{\overline{v_{2}(s)},g_{1}(s)}
  \right)\,ds.
\end{align}

In particular, if $\psi_{1}=\psi_{2}=\psi$ and $g_{1}=g_{2}=g$, then
$v_{1}=v_{2}=v$ satisfies
\begin{align} \label{eq:case1}
  \norm{v(t)}_{L^{2}}^{2}
  = \norm{\psi}_{L^{2}}^{2}
  + 2\Im \int_{0}^{t} \ev*{\overline{v(s)},g(s)}\,ds,
\end{align}
where the time-integrated duality pairing is understood on
\[
  \left(L^{\infty}_{t}L^{2}\cap L^{\rho}_{t}L^{\gamma}\right)
  \times
  \left(L^{1}_{t}L^{2}+L^{\rho'}_{t}L^{\gamma'}\right).
\]
\end{proposition}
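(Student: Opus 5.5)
Since $U(t)$ is unitary on $L^{2}$, we first pull the group out of the inner product. Write
\[
  v_{j}(t)=U(t)w_{j}(t),\qquad w_{j}(t)\coloneqq\psi_{j}-i\int_{0}^{t}U(-s)g_{j}(s)\,ds,
\]
so that $(v_{1}(t),v_{2}(t))=(w_{1}(t),w_{2}(t))$. Expanding by sesquilinearity gives four terms:
\[
  (\psi_{1},\psi_{2})
  + \left(\psi_{1},-i\int_{0}^{t}U(-\theta)g_{2}(\theta)\,d\theta\right)
  + \left(-i\int_{0}^{t}U(-s)g_{1}(s)\,ds,\psi_{2}\right)
  + \left(\int_{0}^{t}U(-s)g_{1}(s)\,ds,\int_{0}^{t}U(-\theta)g_{2}(\theta)\,d\theta\right).
\]
The plan is to evaluate the two mixed terms with the first assertion of Lemma \ref{lem:integral_exchange}, applied with $\psi=\psi_{1}$, $g_{1}=g_{2}$ and with $\psi=\psi_{2}$, $g_{1}=g_{1}$ (using complex conjugation for the latter). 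This produces
\[
  i\int_{0}^{t}\ev{U(s)\psi_{1},\overline{g_{2}(s)}}\,ds
  \quad\text{and}\quad
  -i\int_{0}^{t}\ev{\overline{U(s)\psi_{2}},g_{1}(s)}\,ds,
\]
where the factors $\mp i$ come from antilinearity in the second slot. We then treat the last term with the second assertion of Lemma \ref{lem:integral_exchange}. That assertion writes it as
\[
  \int_{0}^{t}\ev*{\int_{0}^{\theta}U(\theta-s)g_{1}(s)\,ds,\overline{g_{2}(\theta)}}\,d\theta
  +\int_{0}^{t}\ev*{g_{1}(s),\overline{\int_{0}^{s}U(s-\theta)g_{2}(\theta)\,d\theta}}\,ds.
\]

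Next we regroup. Since $\int_{0}^{s}U(s-\tau)g_{j}(\tau)\,d\tau = i\bigl(v_{j}(s)-U(s)\psi_{j}\bigr)$, the first piece becomes $i\int_{0}^{t}\ev{v_{1}-U\psi_{1},\overline{g_{2}}}\,ds$. Its free part cancels against the first mixed term, leaving $i\int_{0}^{t}\ev{v_{1}(s),\overline{g_{2}(s)}}\,ds$. Likewise, the second piece equals
\[
  \int_{0}^{t}\ev*{g_{1},\overline{i(v_{2}-U\psi_{2})}}\,ds
  = -i\int_{0}^{t}\ev*{\overline{v_{2}}-\overline{U\psi_{2}},g_{1}}\,ds,
\]
using the symmetry of the bilinear pairing $\ev{\cdot,\cdot}$. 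Its free part cancels the second mixed term. Together these give \eqref{eq:main}.

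For \eqref{eq:case1}, we take $\psi_{1}=\psi_{2}=\psi$ and $g_{1}=g_{2}=g$. Then $\ev{v,\overline{g}}=\overline{\ev{\overline{v},g}}$, so the integrand equals $\overline{z}-z$ with $z=\ev{\overline{v},g}$. Hence $i(\overline{z}-z)=2\Im z$, which gives the formula.

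The main bookkeeping obstacle is justifying that every pairing is well defined and that the pieces may be separated. For this we need $v_{j}\in L^{\infty}_{t}L^{2}\cap L^{\rho}_{t}L^{\gamma}$ and $U(\cdot)\psi_{j}$ in the same space, so that the regrouped integrals converge individually. Strichartz estimates give this when $(\rho,\gamma)$ is admissible. In the general setting, where only the embedding $H^{\sigma}\hookrightarrow L^{\gamma}$ is assumed, we would instead mollify with $(1-\varepsilon\Delta)^{-\sigma}$ as in the proof of Lemma \ref{lem:integral_exchange}. We would carry out all the above algebra on the regularized quantities, where everything is in $L^{1}_{t}L^{2}$ and Fubini applies, and then pass to the limit $\varepsilon\downarrow0$. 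The delicate point is this final limit in the combined pairings.
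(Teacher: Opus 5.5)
Your proposal is correct and follows the paper's proof essentially step for step. Both arguments use unitarity of $U(t)$ to pass to $(U(-t)v_1(t),U(-t)v_2(t))$, expand into four terms, apply Lemma~\ref{lem:integral_exchange} to the mixed and Duhamel--Duhamel terms, and regroup via $\int_0^s U(s-\tau)g_j(\tau)\,d\tau = i\bigl(v_j(s)-U(s)\psi_j\bigr)$. The well-definedness issue you flag at the end is handled in the paper inside the proof of Lemma~\ref{lem:integral_exchange} through the $(1-\varepsilon\Delta)^{-\sigma}$ regularization. Once that lemma is granted, your regrouping needs no separate limiting argument, because each pairing splits linearly at almost every fixed time.
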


\begin{proof}
We calculate the inner product $(v_{1}(t), v_{2}(t))$.
By the unitarity of $U(t)$, we have
\begin{align*}
(v_{1}(t), v_{2}(t))
&= (U(-t)v_{1}(t), U(-t)v_{2}(t)) \\
&= (\psi_{1}, \psi_{2}) \\
&\quad + \left( \psi_{1}, -i \int_{0}^{t} U(-\tau) g_{2}(\tau) \, d\tau \right)
+ \left( -i \int_{0}^{t} U(-s) g_{1}(s) \, ds, \psi_{2} \right) \\
&\quad + \left( -i \int_{0}^{t} U(-s) g_{1}(s) \, ds, -i \int_{0}^{t} U(-\tau) g_{2}(\tau) \, d\tau \right).
\end{align*}
Applying Lemma \ref{lem:integral_exchange} to exchange the integrals and the inner products, we obtain
\begin{align*}
(v_{1}(t), v_{2}(t))
  &= (\psi_{1}, \psi_{2})
+ i \int_{0}^{t} \ev*{ U(\tau) \psi_{1}, \overline{g_{2}(\tau)} } \, d\tau
- i \int_{0}^{t} \ev*{ \overline{U(s) \psi_{2}}, g_{1}(s) } \, ds \\
  &\quad + i \int_{0}^{t} \ev*{ -i \int_{0}^{\tau} U(\tau-s) g_{1}(s) \, ds, \overline{g_{2}(\tau)} } \, d\tau \\
  &\quad - i \int_{0}^{t} \ev*{ \overline{-i \int_{0}^{s} U(s-\tau) g_{2}(\tau) \, d\tau}, g_{1}(s) } \, ds \\
  &= (\psi_{1}, \psi_{2})
+ i \int_{0}^{t} \ev*{ U(\tau) \psi_{1} -i \int_{0}^{\tau} U(\tau-s) g_{1}(s) \, ds, \overline{g_{2}(\tau)} } \, d\tau \\
  &\quad - i \int_{0}^{t} \ev*{ \overline{U(s) \psi_{2} -i \int_{0}^{s} U(s-\tau) g_{2}(\tau) \, d\tau}, g_{1}(s) } \, ds \\
  &= (\psi_{1}, \psi_{2})
+ i \int_{0}^{t} \ev*{ v_{1}(\tau), \overline{g_{2}(\tau)} } \, d\tau
- i \int_{0}^{t} \ev*{ \overline{v_{2}(s)}, g_{1}(s) } \, ds.
\end{align*}
Thus \eqref{eq:main} holds. The particular case \eqref{eq:case1} follows immediately by substituting $v_{1} = v_{2} = v$, $\psi_{1} = \psi_{2} = \psi$, and $g_{1} = g_{2} = g$ into \eqref{eq:main}.
\end{proof}

\begin{remark}
Although the identity \eqref{eq:main} is formulated here in terms of Lebesgue
spaces, its underlying mechanism is essentially abstract.
The crucial ingredient is not the specific Lebesgue-space
setting itself, but rather the existence of a sufficiently robust dual-pairing
framework compatible with the dispersive or smoothing estimates of the underlying
evolution.
This suggests that, after replacing $L^\gamma$ by suitable Banach spaces
in an appropriate Gelfand triple, analogous identities should persist for a broader
class of dispersive or wave equations.
\end{remark}

\subsection{Charge and energy conservation}

The third author \cite{Ozawa06} first showed that charge and energy conservation can be derived directly from the integral equation.
Within the framework of Proposition \ref{prop:main}, these laws are recovered in a unified way.
\begin{proposition}[Charge conservation]
Let $f$ satisfy \eqref{A1} and \eqref{A2}.
Take $\phi \in L^2(\R^d)$. Let $u$ be an $L^{2}$-solution of \eqref{eq:01} on $[-T,T]$ for some $T>0$.
Then $\norm{u(t)}_{L^2} = \norm{\phi}_{L^2}$ for all $t \in [-T, T]$.
\end{proposition}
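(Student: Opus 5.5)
We apply the special case \eqref{eq:case1} of Proposition \ref{prop:main} with $v=u$, $\psi=\phi$ and $g=f(u)$. If we first arrange $t_0=0$, the integral equation \eqref{eq:01} is exactly the representation of $v$ required there.

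\emph{Reduction to $t_0=0$.} Take $t_0=0$ in \eqref{eq:01}. If the solution is given with some other $t_0\in I$, the group property of $U$ lets us rewrite it with base point $0$, because $u(0)=\phi$. Under the identification $(\rho,\gamma)=(q,r)$ we have $u\in C(I;L^2)\cap L^{q}(I;L^{r})$. Assumption \eqref{A1} together with H\"older's inequality then gives $f(u)\in L^{1}(I;L^{2})+L^{q'}(I;L^{r'})$; this is \eqref{eq:f_reg} with $s=0$. The embedding hypothesis of Proposition \ref{prop:main}, namely $H^{\sigma}\hookrightarrow L^{r}$ for some integer $\sigma$, holds for any $\sigma>d(\frac12-\frac1r)$. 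It is only used inside Lemma \ref{lem:integral_exchange} and is harmless. We also need $\psi=\phi\in L^{2}\cap L^{r}$. This is the one point to watch: $\phi$ lies only in $L^2$. To handle it, I would apply Proposition \ref{prop:main} with base point a time $t_1$ at which $u(t_1)\in L^{r}$, which is true for almost every $t_1$ since $u\in L^{q}(I;L^{r})$. Alternatively, one checks that the term $(\psi,\int U(-s)g\,ds)=\int\langle U(s)\psi,\overline{g(s)}\rangle\,ds$ only needs $U(\cdot)\psi\in L^{q}_tL^{r}$, which is supplied by the Strichartz estimates. Either way, we obtain $\|u(t)\|_{L^2}^2=\|u(t_1)\|_{L^2}^2+2\Im\int_{t_1}^{t}\langle\overline{u(s)},f(u(s))\rangle\,ds$.

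\emph{Vanishing of the integrand.} For almost every $s$, the pairing $\langle\overline{u(s)},f(u(s))\rangle$ is the absolutely convergent integral $\int_{\R^d}\overline{u}\,f(u)\,dx$. Indeed, $u(s)\in L^{2}\cap L^{r}$ and $f(u(s))\in L^{2}+L^{r'}$, and the pairing on this dual pair coincides with the pointwise integral when both functions are genuine functions. By \eqref{A2}, $\Im(\bar u f(u))=0$ pointwise, so the integrand vanishes for almost every $s$. Hence $\|u(t)\|_{L^2}=\|u(t_1)\|_{L^2}$ for all $t$. Continuity of $u$ in $C(I;L^2)$ then lets us pass to $t_1\to0$, or simply take $t=0$, and conclude $\|u(t)\|_{L^2}=\|\phi\|_{L^2}$.

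The main obstacle is the mismatch between $\phi\in L^2$ and the hypothesis $\psi\in L^2\cap L^\gamma$. I expect to resolve it by shifting the base time to a point where $u(t_1)\in L^r$, using time-translation invariance of the Duhamel formula. Identifying the pairing with a pointwise integral and invoking \eqref{A2} is otherwise routine.
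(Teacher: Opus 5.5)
Your argument is correct and is the paper's proof: with $t_0=0$, apply \eqref{eq:case1} to $v=u$, $\psi=\phi$, $g=f(u)$, and observe that $\Im\ev{\overline{u(s)},f(u(s))}=\int_{\R^d}\Im(\bar u f(u))\,dx=0$ by \eqref{A2}. Your detour through a base time $t_1$ with $u(t_1)\in L^{r}$ is a valid way to meet the literal hypothesis $\psi\in L^{2}\cap L^{\gamma}$ of Proposition \ref{prop:main}; the paper simply takes $\psi=\phi$ directly, relying, as in your second alternative, on the fact that only $U(\cdot)\phi\in L^{q}_tL^{r}$ is really needed, and Strichartz supplies this.
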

\begin{proof}
The integral equation \eqref{eq:01} with $t_{0}=0$ reads
\begin{align*}
u(t)=U(t)\phi - i\int_{0}^{t} U(t-s)f(u(s))\,ds.
\end{align*}
Applying \eqref{eq:case1} with $v(t)=u(t)$, $\psi=\phi$, and
$g(s)=f(u(s))$, we obtain
\begin{align*}
  \norm{u(t)}_{L^{2}}^{2}
  = \norm{\phi}_{L^{2}}^{2}
  + 2\Im \int_{0}^{t} \ev*{\overline{u(s)},f(u(s))}\,ds.
\end{align*}
Since $\Im(\overline{z}f(z))=0$ by \eqref{A2}, the last term vanishes.
Hence $\norm{u(t)}_{L^{2}}=\norm{\phi}_{L^{2}}$ for all $t\in[-T,T]$.
\end{proof}

For $H^{1}$-solutions, we define the energy by
\[
  E(u)=\frac12 \norm{\nabla u}_{L^{2}}^{2} + \int_{\R^d} V(u)\,dx.
\]

\begin{proposition}[Energy conservation]
Let $f$ satisfy \eqref{A1} and \eqref{A3}.
Take $\phi \in H^1(\R^d)$. Let $u$ be an $H^{1}$-solution of \eqref{eq:01} on $[-T,T]$ for some $T>0$.
Then $E(u(t)) = E(\phi)$ for all $t \in [-T, T]$.
\end{proposition}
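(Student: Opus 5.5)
Differentiate the integral equation in space and apply the master identity \eqref{eq:case1} to $v=\nabla u$; the nonlinear contribution will then be converted into the time derivative of the potential energy.

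\textbf{Step 1: the gradient equation.} Since $U(t)$ commutes with $\nabla$, applying $\nabla$ to \eqref{eq:01} with $t_0=0$ gives
\[
\nabla u(t)=U(t)\nabla\phi - i\int_0^t U(t-s)\nabla f(u(s))\,ds .
\]
By the regularity facts of Section \ref{sec:prelim}, $\nabla\phi\in L^2$ and $\nabla f(u)\in L^1(I;L^2)+L^{q'}(I;L^{r'})$, with $(q,r)=\bigl(\frac{4(p+1)}{d(p-1)},p+1\bigr)$. Also $H^1\hookrightarrow L^{r}$ by \eqref{p:con}, so $\sigma=1$ is admissible in Proposition \ref{prop:main}. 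One point needs care: Proposition \ref{prop:main} requires $\psi\in L^2\cap L^\gamma$, but $\nabla\phi$ is only in $L^2$. Inspecting the proof, the free term is handled by the first part of Lemma \ref{lem:integral_exchange}. That part only uses that $U(\cdot)\psi\in L^\infty L^2\cap L^qL^r$, which holds by Strichartz. The Yosida-smoothing proof of that lemma goes through verbatim with $\psi\in L^2$ (componentwise). So I will apply \eqref{eq:case1} to the vector-valued $v=\nabla u$, $\psi=\nabla\phi$, $g=\nabla f(u)$, summing over components. This yields
\[
\norm{\nabla u(t)}_{L^2}^2=\norm{\nabla\phi}_{L^2}^2+2\Im\int_0^t\ev*{\overline{\nabla u(s)},\nabla f(u(s))}\,ds .
\]

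\textbf{Step 2: identify the nonlinear term.} Note that $\Im\ev{\overline{\nabla u},\nabla f(u)}=-\Im\ev{\nabla u,\overline{\nabla f(u)}}$, since the pairing is bilinear and conjugation flips the imaginary part. By Lemma \ref{lem:im_grad}, this equals $-2\Re\ev{\partial_tu,\overline{f(u)}}$. The potential-energy lemma applies to $v=u$, because $u\in C(I;H^1)\cap C^1(I;H^{-1})$, $\nabla u\in L^qL^r$ and $\partial_tu\in L^qH^{-1,r}$ by Section \ref{sec:prelim}. Then \eqref{eq:potential_time_derivative} identifies $2\Re\ev{\partial_tu,\overline{f(u)}}$ with $\frac{d}{dt}\mathcal V(u(t))$ in $L^1(I)$. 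Integrating, and using $\mathcal V\in W^{1,1}(I)$ (absolutely continuous), gives
\[
\tfrac12\norm{\nabla u(t)}_{L^2}^2=\tfrac12\norm{\nabla\phi}_{L^2}^2-\mathcal V(u(t))+\mathcal V(\phi).
\]
This is $E(u(t))=E(\phi)$ for every $t\in[-T,T]$; negative $t$ is covered since the integrals are oriented.

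\textbf{Main obstacle.} The substantive analytic content, namely the cancellation that in the formal argument requires $H^2$, is already packaged in Lemma \ref{lem:im_grad} and the potential-energy lemma. So the main point to check is the bookkeeping in Step 1. This means confirming that the conjugation and sign conventions in \eqref{eq:case1} match those in Lemma \ref{lem:im_grad}. It also means confirming that the vector-valued application, with only $L^2$ data, is legitimate. If the $L^2\cap L^r$ hypothesis on $\psi$ causes trouble, I would fall back on splitting $v=U(t)\nabla\phi+w$. In that case the pure free part $\norm{U(t)\nabla\phi}^2$ is handled by unitarity, and the cross and Duhamel terms by the two parts of Lemma \ref{lem:integral_exchange} directly.
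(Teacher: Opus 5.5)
Your proposal is correct and follows the paper's proof essentially verbatim: apply \eqref{eq:case1} to $v=\nabla u$, $\psi=\nabla\phi$, $g=\nabla f(u)$, then convert the nonlinear term into $-2\bigl(\mathcal{V}(u(t))-\mathcal{V}(\phi)\bigr)$ via Lemma \ref{lem:im_grad} and \eqref{eq:potential_time_derivative}. Your extra remark is also sound and is a point the paper's proof passes over silently: $\nabla\phi$ is only in $L^2$, not $L^2\cap L^r$, and what the argument actually needs is $U(\cdot)\nabla\phi\in L^\infty_tL^2\cap L^q_tL^r$, which Strichartz provides.
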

\begin{proof}
Applying $\nabla$ to \eqref{eq:01} with $t_{0} = 0$, we obtain
\begin{align*}
  \nabla u(t)
  = U(t)\nabla \phi - i\int_{0}^{t} U(t-s)\nabla f(u(s))\,ds.
\end{align*}
Applying \eqref{eq:case1} with $v(t)=\nabla u(t)$, $\psi=\nabla\phi$, and
$g(s)=\nabla f(u(s))$, we get
\begin{align*}
  \norm{\nabla u(t)}_{L^{2}}^{2}
  = \norm{\nabla \phi}_{L^{2}}^{2}
  + 2\Im \int_{0}^{t} \ev*{\overline{\nabla u(s)},\nabla f(u(s))}\,ds.
\end{align*}
By Lemma \ref{lem:im_grad} and \eqref{eq:potential_time_derivative},
\begin{align*}
  2\Im \int_{0}^{t} \ev*{\overline{\nabla u(s)},\nabla f(u(s))}\,ds
  &= -2\int_{0}^{t} \frac{d}{ds}
  \left(\int_{\R^{d}} V(u(s))\,dx\right)\,ds \\
  &= -2\left(
  \int_{\R^{d}} V(u(t))\,dx - \int_{\R^{d}} V(\phi)\,dx
  \right).
\end{align*}
Therefore $E(u(t))=E(\phi)$ for all $t\in[-T,T]$.
\end{proof}

\subsection{Momentum conservation}

For $H^{1}$-solutions,
the momentum is defined by
\[
P(u)=\Im\int_{\R^d}\overline{u}\,\nabla u\,dx.
\]
The direct integral-equation approach to momentum conservation was developed in \cite{FM17}.
Within the framework of Proposition \ref{prop:main}, the momentum conservation
is also derived in a unified manner.

\begin{proposition}[Momentum conservation]
Assume that $f$ satisfies \eqref{A1} -- \eqref{A3}.
Take $\phi \in H^1(\R^d)$.
Let $u$ be an $H^{1}$-solution to \eqref{eq:01} on $[-T,T]$ for some $T>0$.
Then $P(u(t)) = P(\phi)$ for all $t \in [-T, T]$.
\end{proposition}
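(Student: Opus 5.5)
The plan is to apply the master identity \eqref{eq:main} with two different Duhamel representations: one for $u$ itself and one for $\partial_k u$, $k=1,\dots,d$. Concretely, I take $v_1(t)=u(t)$, $\psi_1=\phi$, $g_1=f(u)$, and $v_2(t)=\partial_k u(t)$, $\psi_2=\partial_k\phi$, $g_2=\partial_k f(u)$. The second representation is obtained by differentiating \eqref{eq:01} (with $t_0=0$) in $x_k$, exactly as in the proof of energy conservation. The hypotheses of Proposition \ref{prop:main} are satisfied with $(\rho,\gamma)=(q,r)$ and $\sigma=1$, using $H^1\hookrightarrow L^r$. Indeed, $\phi,\partial_k\phi\in L^2$, while \eqref{eq:f_reg} with $s=1$ gives $f(u),\partial_kf(u)\in L^1(I;L^2)+L^{q'}(I;L^{r'})$. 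There is one point to watch: \eqref{eq:main} requires $\psi_j\in L^2\cap L^\gamma$, and $\partial_k\phi$ is only in $L^2$. I would either note that the proof of Lemma \ref{lem:integral_exchange} only uses $\psi\in L^2$ together with Strichartz estimates for $U(s)\psi$, or approximate $\phi$ in $H^1$ by data with $\partial_k\phi\in L^r$, keeping $u$ fixed, since only the free part changes and it is linear.

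Applying \eqref{eq:main} then gives
\[
(u(t),\partial_k u(t))=(\phi,\partial_k\phi)+i\int_0^t\Bigl(\ev*{u(s),\overline{\partial_k f(u(s))}}-\ev*{\overline{\partial_k u(s)},f(u(s))}\Bigr)ds .
\]
The next step is to move the derivative off $f(u)$ in the first pairing. Since $u(s)\in H^1\cap H^{1,r}$ for a.e.\ $s$ and $f(u(s))\in H^1+H^{1,r'}$, the density argument gives
\[
\ev*{u,\overline{\partial_k f(u)}}=-\ev*{\partial_k u,\overline{f(u)}} .
\]
The integrand therefore becomes
\[
-\ev*{\partial_ku,\overline{f(u)}}-\ev*{\overline{\partial_k u},f(u)}=-2\Re\ev*{\partial_k u,\overline{f(u)}} .
\]
This vanishes for a.e.\ $s$ by \eqref{eq:density_cancel} with $s=1$. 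Consequently $(u(t),\partial_ku(t))=(\phi,\partial_k\phi)$ for every $t$.

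Finally, observe that $P_k(u)=\Im\int\bar u\,\partial_k u\,dx=-\Im(u,\partial_k u)$, so taking imaginary parts yields $P(u(t))=P(\phi)$. The real part carries no information, since it equals zero by integration by parts anyway.

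The main obstacle is the functional-analytic bookkeeping. First, the integration by parts inside the duality pairing on $(L^2\cap L^r)\times(L^2+L^{r'})$ must be justified, which I would do by approximating $u(s)$ in $H^1\cap H^{1,r}$ by Schwartz functions. Second, the requirement $\psi_2\in L^r$ in Proposition \ref{prop:main} has to be handled. For the latter, I would first try to observe that $U(\cdot)\partial_k\phi\in L^q(I;L^r)$ by Strichartz, which is all the exchange lemma actually uses.
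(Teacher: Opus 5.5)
Your proposal is correct and follows essentially the same route as the paper: apply \eqref{eq:main} to the Duhamel formulas for $u$ and $\partial_k u$, integrate by parts so the integrand becomes $-2\Re\ev*{\partial_k u,\overline{f(u)}}$, and conclude by \eqref{eq:density_cancel}. Your extra remark is also right and fills a hypothesis the paper passes over (the same issue arises in its energy proof): $\psi_2=\partial_k\phi$ is in general only in $L^2$, not $L^2\cap L^r$, but the exchange lemma only needs $U(\cdot)\psi_2\in L^q(I;L^r)$, which Strichartz gives for any $\psi_2\in L^2$.
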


\begin{proof}
Arguing as in the proof of charge and energy conservation, we obtain
\begin{align*}
u(t)
&= U(t)\phi
- i\int_{0}^{t} U(t-s)f(u(s))\,ds, \\
\nabla u(t)
&= U(t)\nabla \phi
- i\int_{0}^{t} U(t-s)\nabla f(u(s))\,ds.
\end{align*}
Applying \eqref{eq:main} with $v_{1}(t)=u(t)$, $v_{2}(t)=\nabla u(t)$,
$\psi_1=\phi$, $\psi_2=\nabla\phi$,
$g_1(t)=f(u(t))$, and $g_2(t)=\nabla f(u(t))$,
together with integration by parts, we obtain
\begin{align*}
  (u(t),\nabla u(t))
  &= (\phi,\nabla\phi)
  + i\int_0^t
  \left(
  \ev*{u(s),\overline{\nabla f(u(s))}}
  -
  \ev*{\overline{\nabla u(s)},f(u(s))}
  \right)\,ds \\
  &= (\phi,\nabla\phi)
  - 2i\int_0^t \Re \ev*{\nabla u(s),\overline{f(u(s))}}\,ds.
\end{align*}
Hence
\begin{align*}
iP(u(t))
&= iP(\phi)
-2i\int_0^t \Re \ev*{\nabla u(s),\overline{f(u(s))}}\,ds.
\end{align*}
By the cancellation property \eqref{eq:density_cancel},
the integrand on the right-hand side vanishes.
Therefore $P(u(t))=P(\phi)$ for all $t\in[-T,T]$.
\end{proof}

\begin{remark}
Arguing as in \cite{FM17},
the same argument extends to fractional regularity.
More precisely, if $d \ge 2$, let $1/2 \le s < 1$ and assume in addition that
\[
1 < p \le 1+\frac{4}{d-2s},
\]
so that $H^s(\R^d)\hookrightarrow L^{p+1}(\R^d)$.
If $d=1$, let $1/2 < s < 1$. Then $H^s(\R)\hookrightarrow L^\infty(\R)$.
In either case, \eqref{eq:f_reg} follows from the corresponding Sobolev embedding
and the fractional chain rule.
Then
the same proof yields momentum conservation for $H^s$-solutions.
To avoid working directly with the duality pairing on
$H^{-1/2}\times H^{1/2}$, one may insert the Riesz transform
\[
\mathcal{R}\coloneqq -\frac{\nabla}{|\nabla|}.
\]
Using the factorization $\nabla = -\mathcal{R}|\nabla|^{\frac12}|\nabla|^{\frac12}$,
the momentum can be rewritten in the $L^2$-based form
\[
iP(u)=\bigl(|\nabla|^{\frac12}u,\mathcal{R}|\nabla|^{\frac12}u\bigr).
\]
Applying \eqref{eq:main},
we obtain the same conclusion, since the nonlinear
contribution vanishes by the skew-self-adjointness of $\mathcal{R}$.
\end{remark}

\subsection{Pseudo-conformal conservation law and virial-type identities}

We next consider the pseudo-conformal conservation law and the virial-type identities,
which are both governed by the $L^2$-preserving scaling structure.
Within the framework of Proposition \ref{prop:main},
both identities are derived in a unified manner.
For the pseudo-conformal conservation law, we follow the direct
integral-equation approach of \cite{FM17}.

\begin{proposition}[Pseudo-conformal conservation law]
Assume that $f$ satisfies \eqref{A1} -- \eqref{A3}.
Let $u$ be a $\Sigma$-solution to \eqref{nls} on $[-T, T]$ for some $T>0$ with $\phi \in \Sigma$.
Then
\begin{align}
&\norm{J(t)u(t)}_{L^2}^2 + 2t^2 \int_{\R^d}V(u(t))\, dx
= \norm{x \phi}_{L^2}^2 + 2 \int_{0}^{t} \left( s \int_{\R^{d}} W(u(s))\, dx \right) ds \label{pc-law}
\end{align}
for all $t \in [-T, T]$, where $W(u) = (d+2)V(u) - \frac{d}{2}V'(|u|)\abs{u}$.
\end{proposition}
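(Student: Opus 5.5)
We apply the master identity \eqref{eq:case1} to $v(t)=J(t)u(t)$. The first step is to write $J(t)u$ in Duhamel form. Since $J(t)=U(t)\,x\,U(-t)$, applying $J(t)$ to \eqref{eq:01} with $t_0=0$ gives
\begin{align*}
  J(t)u(t) = U(t)(x\phi) - i\int_0^t U(t-s)\,J(s)f(u(s))\,ds.
\end{align*}
Indeed, $J(t)U(t-s)=U(t-s)J(s)$, because $J(t)U(t)=U(t)x$. To justify this at the level of the integral equation, we apply $U(-t)$ and use $xU(-s)=U(-s)J(s)$ together with the regularity $J(\cdot)f(u)\in L^1(I;L^2)+L^{q'}(I;L^{r'})$ recorded in Section~\ref{sec:prelim}. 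The weight $x$ is not bounded, so this commutation must be checked in the distributional sense. We do this by pairing with Schwartz test functions $\chi$ and moving $x$ onto $\chi$; the first identity of Lemma~\ref{lem:integral_exchange} then handles the Duhamel integral.

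Next we apply \eqref{eq:case1} with $\psi=x\phi\in L^2$, $g(s)=J(s)f(u(s))$, and $(\rho,\gamma)=(q,r)$. Note that $J(0)\phi=x\phi$, and that $v=J(\cdot)u\in L^\infty L^2\cap L^qL^r$ by the definition of a $\Sigma$-solution. This gives
\begin{align*}
  \norm{J(t)u(t)}_{L^2}^2 = \norm{x\phi}_{L^2}^2 + 2\Im\int_0^t \ev*{\overline{J(s)u(s)},J(s)f(u(s))}\,ds.
\end{align*}
The pairing $\ev{\overline{a},b}$ is the complex conjugate of $\ev{a,\overline{b}}$. Hence the nonlinear term equals $-2\int_0^t\Im\ev*{J(s)u(s),\overline{J(s)f(u(s))}}\,ds$, and by \eqref{eq:integrated_pseudo_term} of Lemma~\ref{lem:integrated_J_terms} this is
\begin{align*}
  -2t^2\mathcal{V}(u(t)) + 2\int_0^t s\,\mathcal{W}(u(s))\,ds.
\end{align*}
Moving $2t^2\mathcal{V}(u(t))$ to the left-hand side yields \eqref{pc-law}, provided that $\mathcal{W}(v)=\int_{\R^d} W(v)\,dx$. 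This follows from \eqref{eq:W_def} and the formula for $\mathcal{D}_{\mathrm{sc}}$:
\begin{align*}
  2\mathcal{V}+d\mathcal{V}-\tfrac d2\int_{\R^d} V'(|v|)|v|\,dx = \int_{\R^d}\Bigl((d+2)V-\tfrac d2V'(|v|)|v|\Bigr)dx.
\end{align*}

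The main obstacle is the first step, namely the rigorous Duhamel formula for $J(t)u$. This requires checking that $J(t)$ commutes through the Duhamel integral when $J(s)f(u(s))$ lies only in $L^1L^2+L^{q'}L^{r'}$, and that the equality holds in $L^2$ for every $t$ rather than merely as distributions. I would handle it by the duality/test-function argument above, using $J(s)=U(s)\,x\,U(-s)$ on $\mathscr{S}$ and the Yosida-type mollification from the proof of Lemma~\ref{lem:integral_exchange}. The algebraic part is already contained in Lemma~\ref{lem:integrated_J_terms}, which in turn relies on \eqref{eq:J_pseudo_algebra}, Lemma~\ref{lem:im_grad}, and \eqref{eq:potential_weighted_integration} with $k=2$, so once the Duhamel formula is in place the proof is a bookkeeping exercise.
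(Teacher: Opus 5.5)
Your proposal is correct and follows the paper's proof essentially verbatim. Like the paper, you write $J(t)u$ in Duhamel form via $J(t)=U(t)xU(-t)$, apply \eqref{eq:case1} with $\psi=x\phi$ and $g=J(\cdot)f(u)$, substitute \eqref{eq:integrated_pseudo_term}, and note $\mathcal{W}(v)=\int_{\R^d}W(v)\,dx$; your duality/test-function justification of the Duhamel formula for $J(t)u$ is sound and is extra detail that the paper simply asserts.
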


\begin{proof}
Recall $J(t) =U(t)xU(-t)$.
Applying $J(t)$ to the integral equation, we obtain
\begin{align*}
J(t)u(t)
= U(t)x\phi - i\int_{0}^{t} U(t-s)J(s)f(u(s))\,ds.
\end{align*}
Applying \eqref{eq:case1} with $v(t)=J(t)u(t)$, $\psi=x\phi$, and
$g(s)=J(s)f(u(s))$, we obtain
\begin{align*}
\norm{J(t)u(t)}_{L^{2}}^{2}
&= \norm{x\phi}_{L^{2}}^{2}
+ 2\Im \int_{0}^{t}
\ev*{\overline{J(s)u(s)},J(s)f(u(s))}\,ds \\
&= \norm{x\phi}_{L^{2}}^{2}
- 2\int_{0}^{t}
\Im
\ev*{J(s)u(s),\overline{J(s)f(u(s))}}\,ds.
\end{align*}
Substituting \eqref{eq:integrated_pseudo_term} into the identity, we obtain \eqref{pc-law}.
\end{proof}

We next derive the virial-type identities.
Except for the first-order virial identity,
a rigorous derivation in the integral-equation framework was obtained in
\cite{IMMO25}.
In the standard approach, a weight such as
$e^{-\varepsilon|x|^2}$ is typically introduced to recover integrability.
The argument in \cite{IMMO25} avoids this regularization, but still relies on
the pseudo-conformal conservation law.
By contrast, the present approach yields a direct derivation of the first-order
virial identity from \eqref{eq:main}, avoiding both the weight regularization
and the pseudo-conformal conservation law.

\begin{proposition}[First-order
virial identity]
Assume that $f$ satisfies \eqref{A1} -- \eqref{A3}.
Let $u$ be a $\Sigma$-solution to \eqref{nls} on $[-T, T]$ for some $T>0$ with $\phi \in \Sigma$.
Then for any $t \in [-T, T]$,
\begin{align} \label{eq:vi_1st}
\norm{xu(t)}_{L^2}^2 = \norm{x\phi}_{L^2}^2 + 2 \int_0^t \Im ( \nabla u(s), x u(s) ) \, ds.
\end{align}
\end{proposition}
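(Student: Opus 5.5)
The plan is to avoid differentiating $\norm{xu(t)}_{L^2}^2$ directly. Instead I will use the decomposition
\[
xu(t)=J(t)u(t)-it\nabla u(t)
\]
and compute every resulting quadratic quantity with the master identity \eqref{eq:main}. Both $J(t)u(t)$ and $\nabla u(t)$ satisfy Duhamel formulas of the type covered by Proposition \ref{prop:main}:
\begin{itemize}
\item $J(t)u(t)$ has data $\psi=x\phi$ and forcing $g=J(s)f(u(s))$;
\item $\nabla u(t)$ has data $\psi=\nabla\phi$ and forcing $g=\nabla f(u(s))$.
\end{itemize}
The regularity needed for both is recorded in Section \ref{sec:prelim}. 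Since $(a,ib)=-i(a,b)$, expanding the norm gives
\[
\norm{xu(t)}_{L^2}^2=\norm{J(t)u(t)}_{L^2}^2+t^2\norm{\nabla u(t)}_{L^2}^2-2t\Im\bigl(J(t)u(t),\nabla u(t)\bigr).
\]

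I will evaluate the three terms at time $t$ as follows.
\begin{enumerate}
\item For $\norm{Ju}^2$, apply \eqref{eq:case1} with \eqref{eq:integrated_pseudo_term}. This is exactly the pseudo-conformal computation:
\[
\norm{J(t)u(t)}^2=\norm{x\phi}^2-2t^2\mathcal{V}(u(t))+2\int_0^t s\,\mathcal{W}(u(s))\,ds .
\]
\item For $\norm{\nabla u}^2$, use the energy computation:
\[
\norm{\nabla u(t)}^2=\norm{\nabla\phi}^2-2\mathcal{V}(u(t))+2\mathcal{V}(\phi).
\]
\item For the cross term, apply \eqref{eq:main} with $v_1=Ju$ and $v_2=\nabla u$. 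Its initial value is $(x\phi,\nabla\phi)$. Its integrand is $i\bigl(\ev{Ju,\overline{\nabla f(u)}}-\ev{\overline{\nabla u},Jf(u)}\bigr)$, and the imaginary part of this integrand is the real-part combination appearing in \eqref{eq:J_cross_algebra}. Hence \eqref{eq:integrated_cross_term} expresses $\Im(J(t)u(t),\nabla u(t))$ as $\Im(x\phi,\nabla\phi)$ plus $\pm\bigl(-\int_0^t\mathcal{W}\,ds+2t\mathcal{V}(u(t))\bigr)$, with the sign fixed by conjugation bookkeeping.
\end{enumerate}
Substituting these gives a closed formula for $\norm{xu(t)}^2$ in terms of $\phi$, $\mathcal{V}(u(t))$ and time integrals of $\mathcal{W}$.

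Next I rewrite the right-hand side of \eqref{eq:vi_1st}. Since $(\nabla u,it\nabla u)=-it\norm{\nabla u}^2$, the integrand is
\[
\Im(\nabla u(s),xu(s))=\Im(\nabla u(s),J(s)u(s))+s\norm{\nabla u(s)}^2 .
\]
I will insert the same two formulas from items 2 and 3 at time $s$ and integrate over $[0,t]$. Double integrals $\int_0^t\int_0^s\mathcal{W}$ will appear, and I will reduce them to $\int_0^t(t-s)\mathcal{W}(u(s))\,ds$ by Fubini. The terms $\int_0^t s\,\mathcal{V}(u(s))\,ds$ should recombine using the relation $\mathcal{W}=2\mathcal{V}-\mathcal{D}_{\mathrm{sc}}$ only if necessary. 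Most likely they simply match, because both sides are built from the same ingredients.

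The main obstacle is this final bookkeeping step: confirming that the $\mathcal{V}(u(t))$, $\int s\mathcal{W}$ and $t\int\mathcal{W}$ contributions on both sides agree, and tracking the sign conventions between $\Im(Ju,\nabla u)$ and $\Im(\nabla u,Ju)$. For instance, the $t^2\mathcal{V}(u(t))$ terms should cancel on the left, since $-2t^2\mathcal{V}-2t^2\mathcal{V}+4t^2\mathcal{V}=0$. Matching the initial-data terms uses $\norm{x\phi}^2$ together with $\Im(x\phi,\nabla\phi)$ and the linear-in-$t$ pieces. If the direct matching becomes messy, a cleaner alternative is available: show that both sides are absolutely continuous functions of $t$ that agree at $t=0$. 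The left side is absolutely continuous by the formula above, because every term is an integral in $t$ or is $W^{1,1}$ by the potential-energy calculus. It then suffices to compare the a.e. derivatives, which reduces the check to the pointwise identities \eqref{eq:J_cross_algebra}, Lemma \ref{lem:im_grad} and \eqref{eq:potential_time_derivative}.
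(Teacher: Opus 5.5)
Your proposal is correct, but it takes a different route from the paper. The paper never splits $xu$. It writes a Duhamel formula for $xu$ itself, from $i\partial_t(xu)+\frac12\Delta(xu)=xf(u)+\nabla u$ in $H^{-2}$, and applies \eqref{eq:case1} once with $\psi=x\phi$ and $g=\nabla u+xf(u)$. The nonlinear part $\Im\ev{x\overline{u},xf(u)}=\int_{\R^d}|x|^2\Im(\overline{u}f(u))\,dx$ vanishes pointwise by \eqref{A2}, and the linear forcing $\nabla u$ produces exactly the integrand $\Im(\nabla u,xu)$. So the only structural input on $f$ is \eqref{A2}. Energy conservation, Lemma \ref{lem:im_grad} and the pseudo-conformal law are never used.

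Your argument instead passes three applications of \eqref{eq:main} through \eqref{eq:integrated_pseudo_term}, energy conservation and \eqref{eq:integrated_cross_term}, all of which rest on \eqref{A3}. In effect you show that both sides of \eqref{eq:vi_1st} equal the right-hand side of the full virial identity \eqref{virial}. This is essentially the pseudo-conformal-based derivation of \cite{IMMO25}, which the paper explicitly aims to bypass. Your route has two advantages: you never have to justify an integral equation for $xu$ (you only use $J(t)=U(t)xU(-t)$ and the commutation of $\nabla$ with $U(t)$), and you get \eqref{virial} and \eqref{eq:cross_term_identity} as by-products. The cost is the full second-order machinery for a first-order statement.

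The bookkeeping you left open does close. The sign in your item 3 is $\Im(J(t)u(t),\nabla u(t))=\Im(x\phi,\nabla\phi)+\int_0^t\mathcal{W}(u(s))\,ds-2t\mathcal{V}(u(t))$. The $\mathcal{V}(u(t))$ terms cancel as you predicted, and no $\int_0^t s\,\mathcal{V}(u(s))\,ds$ terms arise. Both sides then reduce to $\norm{x\phi}_{L^2}^2+2t\Im(\nabla\phi,x\phi)+2t^2E(\phi)-2\int_0^t(t-s)\mathcal{W}(u(s))\,ds$.
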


\begin{proof}
We begin with the equation satisfied by $xu$.
Since
\[
i\partial_t(xu)+\frac12 \Delta(xu)=xf(u)+\nabla u
\]
holds in $H^{-2}(\R^d)$ and the right-hand side belongs to
$L^{1}([-T,T];H^{-2}(\R^d))$, the function $xu$ is absolutely continuous in
time as an $H^{-2}(\R^d)$-valued function.
Hence
\begin{align*}
xu(t)
= U(t)x\phi - i\int_{0}^{t} U(t-s)\bigl(\nabla u(s)+x f(u(s))\bigr)\,ds.
\end{align*}
By \eqref{reg:4}, all duality pairings below are well defined.
Applying \eqref{eq:case1} with $v(t)=xu(t)$, $\psi=x\phi$, and
$g(s)=\nabla u(s)+x f(u(s))$, we obtain
\begin{align*}
\norm{xu(t)}_{L^{2}}^{2}
&= \norm{x\phi}_{L^{2}}^{2}
+ 2\Im \int_{0}^{t}
\ev*{x \overline{u(s)},\nabla u(s)+x f(u(s))}\,ds \\
&= \norm{x\phi}_{L^{2}}^{2}
+ 2\int_{0}^{t} \Im(\nabla u(s),xu(s))\,ds,
\end{align*}
where we used
\[
\Im \ev{x\overline{u},xf(u)}
=
\int_{\R^{d}} |x|^{2}\Im(\overline{u}f(u))\,dx = 0
\]
by \eqref{A2}.
This proves \eqref{eq:vi_1st}.
\end{proof}

We next analyze the quantity $\Im(\nabla u,xu)$ appearing in the virial
identity for $\Sigma$-solutions.
The following cross-term identity is obtained from Proposition \ref{prop:main} in the same manner as above.

\begin{proposition}[Cross-term identity]
Assume that $f$ satisfies \eqref{A1} -- \eqref{A3}.
Let $u$ be a $\Sigma$-solution to \eqref{nls} on $[-T, T]$ for some $T>0$ with $\phi \in \Sigma$.
Then
\begin{align} \label{eq:cross_term_identity}
\Im (\nabla u(t), x u(t)) = \Im (\nabla \phi, x\phi) + 2t E(\phi) - \int_0^t \left( \int_{\R^d} W(u(s)) \, dx \right) ds
\end{align}
for any $t \in [-T, T]$, where $W(u) = (d+2)V(u) - \frac{d}{2}V'(|u|)\abs{u}$.
\end{proposition}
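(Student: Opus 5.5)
We apply the master identity \eqref{eq:main} with $v_1(t)=\nabla u(t)$ and $v_2(t)=xu(t)$, using the Duhamel representations already derived:
\begin{align*}
\nabla u(t)&=U(t)\nabla\phi-i\int_0^t U(t-s)\nabla f(u(s))\,ds,\\
xu(t)&=U(t)x\phi-i\int_0^t U(t-s)\bigl(\nabla u(s)+xf(u(s))\bigr)\,ds.
\end{align*}
Thus $\psi_1=\nabla\phi$, $\psi_2=x\phi$, $g_1=\nabla f(u)$, and $g_2=\nabla u+xf(u)$. By \eqref{reg:4} and the regularity of $\nabla f(u)$, the hypotheses of Proposition \ref{prop:main} hold with $(\rho,\gamma)=(q,r)$ and $\sigma=1$. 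This gives
\[
(\nabla u(t),xu(t))=(\nabla\phi,x\phi)+i\int_0^t\Bigl(\ev*{\nabla u,\overline{\nabla u+xf(u)}}-\ev*{x\overline{u},\nabla f(u)}\Bigr)ds .
\]
Taking imaginary parts turns $i\int(\cdots)$ into $\int\Re(\cdots)$. The term $\Re\ev{\nabla u,\overline{\nabla u}}$ equals $\norm{\nabla u(s)}_{L^2}^2$. The remaining nonlinear part is
\[
\Re\ev*{\nabla u,\overline{xf(u)}}-\Re\ev*{xu,\overline{\nabla f(u)}}.
\]
This is exactly the $s=0$ specialization of the cross term in \eqref{eq:J_cross_algebra}, since $J(0)=x$.

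The main point is to handle this nonlinear part. Rather than reprove the identity, I would use \eqref{eq:J_cross_algebra} at $s=0$, which gives the pointwise-in-time value $\mathcal{D}_{\mathrm{sc}}(u(s))$. The $s$-dependent term $2s\Im\ev{\nabla v,\overline{\nabla f(v)}}$ is absent at $s=0$, so Lemma \ref{lem:im_grad} is not needed for this piece.

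Alternatively, one could use \eqref{eq:integrated_cross_term}. That identity, however, involves $J(s)$ rather than $x$, so the version with $x$ is cleaner. The density justification of \eqref{eq:J_cross_algebra} requires $u(s)\in H^1\cap H^{1,r}$ and $xu(s)\in L^2\cap L^r$, which hold for a.e. $s$ by \eqref{reg:4}.

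Collecting terms, we get
\[
\Im(\nabla u(t),xu(t))=\Im(\nabla\phi,x\phi)+\int_0^t\Bigl(\norm{\nabla u(s)}_{L^2}^2+\mathcal{D}_{\mathrm{sc}}(u(s))\Bigr)ds,
\]
up to a sign convention that must be checked carefully. Compared with the formal identity \eqref{eq:vi2}, the expected integrand is $\norm{\nabla u}_{L^2}^2-\mathcal{W}(u)+\dots$, so I anticipate the correct form is $\norm{\nabla u}^2-\mathcal{D}_{\mathrm{sc}}$ once conjugations and the ordering $\Im(\nabla u,xu)$ are tracked.

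Next I would use energy conservation in the form $\norm{\nabla u(s)}_{L^2}^2=2E(\phi)-2\mathcal{V}(u(s))$. This turns the integrand into $2E(\phi)-\bigl(2\mathcal{V}(u)\pm\mathcal{D}_{\mathrm{sc}}(u)\bigr)$. With the correct sign, the bracket is $\mathcal{W}(u)=2\mathcal{V}-\mathcal{D}_{\mathrm{sc}}=\int W(u)\,dx$ by \eqref{eq:W_def}. Integrating in time then yields $2tE(\phi)-\int_0^t\int W(u)\,dx\,ds$, which is \eqref{eq:cross_term_identity}.

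I expect the main obstacle to be the sign and conjugation bookkeeping. Specifically, one must identify the duality pairings $\ev{x\overline u,\nabla f(u)}$ and $\ev{\nabla u,\overline{xf(u)}}$ with the combination appearing in \eqref{eq:J_cross_algebra}. This requires an integration by parts in the pairing, justified on $(L^2\cap L^r)\times(L^2+L^{r'})$ by density. I would check it first on Schwartz functions, where it reduces to $\Re\int \bigl(\nabla u\cdot x\overline{f(u)}+(\nabla\cdot x)u\,\overline{f(u)}\bigr)\,dx$ and the defining property of $\mathcal{D}_{\mathrm{sc}}$.
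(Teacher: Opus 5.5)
Your argument is correct, and it takes a genuinely different route from the paper's.

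\textbf{The paper's route.} The paper applies \eqref{eq:main} to the pair $(\nabla u, J(t)u)$. The Duhamel formula for $J(t)u$ comes directly from $J(t)=U(t)xU(-t)$ and has the purely nonlinear source $J(s)f(u(s))$. The resulting cross term is \eqref{eq:J_cross_algebra} at general $s$, so it carries the extra piece $2s\Im\ev{\nabla u,\overline{\nabla f(u)}}$. That piece is disposed of via Lemma \ref{lem:im_grad} and the weighted integration by parts \eqref{eq:potential_weighted_integration} with $k=1$; this is \eqref{eq:integrated_cross_term}. The paper then uses $\Im(\nabla u,J(t)u)=\Im(\nabla u,xu)-t\norm{\nabla u}_{L^2}^2$ and energy conservation only at the endpoint $t$.

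\textbf{Your route.} You pair $\nabla u$ with $xu$ itself, using the Duhamel formula with source $\nabla u+xf(u)$ from the proof of \eqref{eq:vi_1st}. The linear part of the source gives $\norm{\nabla u(s)}_{L^2}^2$ directly. The nonlinear part is only the static ($s=0$) scaling identity, so no time derivative of the potential energy enters this step.

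\textbf{What each buys.} Your version needs the $xu$-Duhamel formula, whose justification by absolute continuity in $H^{-2}$ the paper has already given. It also needs energy conservation at every intermediate time $s\in[0,t]$, not just at $t$. Since energy conservation itself rests on Lemma \ref{lem:im_grad}, the underlying ingredients are the same. The paper's choice keeps the argument parallel to the pseudo-conformal law. Yours removes $J$ entirely from the derivation of the virial identity and stays closer to the formal pairing with $x\cdot\nabla+\frac d2$.

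\textbf{On the sign.} The computation you actually wrote down has the correct sign; the correction you anticipate is wrong. For Schwartz $v$, integration by parts gives
\[
\Re\int_{\R^d}\nabla v\cdot x\,\overline{f(v)}\,dx-\Re\int_{\R^d}xv\cdot\overline{\nabla f(v)}\,dx
=\Re\int_{\R^d}\bigl(2x\cdot\nabla v+dv\bigr)\overline{f(v)}\,dx=\mathcal{D}_{\mathrm{sc}}(v).
\]
Hence the integrand is
\[
\norm{\nabla u(s)}_{L^2}^2+\mathcal{D}_{\mathrm{sc}}(u(s))=2E(\phi)-\bigl(2\mathcal{V}(u(s))-\mathcal{D}_{\mathrm{sc}}(u(s))\bigr)=2E(\phi)-\mathcal{W}(u(s)),
\]
which is exactly what your final step requires. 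With $\norm{\nabla u}_{L^2}^2-\mathcal{D}_{\mathrm{sc}}$ instead, the bracket would be $2\mathcal{V}+\mathcal{D}_{\mathrm{sc}}\neq\mathcal{W}$, and the conclusion would fail.

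The formal display \eqref{eq:vi2} is not a safe reference for this check. For $f=0$ it would give $\frac{d}{dt}\Im(\nabla u,xu)=-\norm{\nabla u}_{L^2}^2$, which contradicts \eqref{eq:cross_term_identity} itself; a free Gaussian confirms the $+$ sign. So drop the hedge and commit to $+\mathcal{D}_{\mathrm{sc}}$.
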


\begin{proof}
We consider the two integral equations
\begin{align*}
J(t)u(t)
&= U(t)x\phi - i\int_{0}^{t} U(t-s)J(s)f(u(s))\,ds
\end{align*}
and
\begin{align*}
\nabla u(t)
&= U(t)\nabla\phi - i\int_{0}^{t} U(t-s)\nabla f(u(s))\,ds.
\end{align*}
Applying \eqref{eq:main} with
\[
  v_{1}(t)=\nabla u(t),\quad v_{2}(t)=J(t)u(t),\quad
  \psi_{1}=\nabla\phi,\quad \psi_{2}=x\phi,
\]
\[
g_{1}(s)=\nabla f(u(s)),\quad g_{2}(s)=J(s)f(u(s)),
\]
and taking the imaginary part,
we obtain
\begin{align*}
\Im(\nabla u(t),J(t)u(t))
={}& \Im(\nabla\phi,x\phi) \\
&+
\int_{0}^{t}
\left(
\Re \ev*{\nabla u(s),\overline{J(s)f(u(s))}}
-
\Re \ev*{J(s)u(s),\overline{\nabla f(u(s))}}
\right)ds.
\end{align*}
Since $J(t) =x +it\nabla$, we have
\[
\Im(\nabla u(t),J(t)u(t))
=
\Im(\nabla u(t),xu(t))
-
t\norm{\nabla u(t)}_{L^2}^{2}.
\]
Therefore, combining the two identities above with \eqref{eq:integrated_cross_term},
\begin{align*}
  \Im(\nabla u(t),xu(t))
  &=
  \Im(\nabla u(t),J(t)u(t))
  +
  t\norm{\nabla u(t)}_{L^2}^{2} \\
  &=
  \Im(\nabla\phi,x\phi)
  -
  \int_0^t \left( \int_{\R^{d}}W(u(s))\, dx \right) ds
  +
  t\norm{\nabla u(t)}_{L^2}^{2}
  +
  2t \int_{\R^{d}}V(u(t))\, dx.
\end{align*}
Using the conservation of energy, we get
\[
\norm{\nabla u(t)}_{L^2}^{2}
+
2 \int_{\R^{d}}V(u(t))\, dx
=
2E(u(t))
=
2E(\phi).
\]
Substituting this into the preceding identity yields \eqref{eq:cross_term_identity}.
\end{proof}

Combining the preceding two propositions, we obtain the virial identity.

\begin{proposition}[Virial identity] 
Assume that $f$ satisfies \eqref{A1} -- \eqref{A3}.
Let $u$ be a $\Sigma$-solution to \eqref{nls} on $[-T, T]$ for some $T>0$ with $\phi \in \Sigma$.
Then
\begin{align*}
  \norm{xu(t)}_{L^2}^2 ={}& \norm{x\phi}_{L^2}^2 + 2t \Im (\nabla \phi, x\phi) + 2t^2 E(\phi)
  - 2 \int_0^t \int_0^s \int_{\R^d} W(u(\tau)) \, dx d\tau ds
\end{align*}
for any $t \in [-T, T]$.
\end{proposition}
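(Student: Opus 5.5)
The plan is to combine the first-order virial identity \eqref{eq:vi_1st} with the cross-term identity \eqref{eq:cross_term_identity}; nothing beyond elementary integration in time is needed.

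First, by \eqref{eq:vi_1st},
\[
\norm{xu(t)}_{L^2}^2 = \norm{x\phi}_{L^2}^2 + 2\int_0^t \Im(\nabla u(s),xu(s))\,ds .
\]
The integrand is continuous in $s$, since $u \in C([-T,T];H^1)$ and $xu \in C([-T,T];L^2)$ by \eqref{reg:4}. Hence the integral is a genuine Riemann integral, and we may substitute \eqref{eq:cross_term_identity} pointwise in $s$:
\[
\Im(\nabla u(s),xu(s)) = \Im(\nabla\phi,x\phi) + 2sE(\phi) - \int_0^s\int_{\R^d}W(u(\tau))\,dx\,d\tau .
\]

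Second, we integrate each term over $[0,t]$. The constant term contributes $t\,\Im(\nabla\phi,x\phi)$. The linear term contributes $2E(\phi)\int_0^t s\,ds = t^2E(\phi)$. Multiplying by $2$ gives
\[
2t\,\Im(\nabla\phi,x\phi) + 2t^2E(\phi) - 2\int_0^t\int_0^s\int_{\R^d}W(u(\tau))\,dx\,d\tau\,ds,
\]
which is exactly the stated right-hand side. For negative $t$ the same computation holds with oriented integrals.

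The only point needing care is that $\tau\mapsto\int_{\R^d}W(u(\tau))\,dx$ is locally integrable, so that the iterated integral makes sense. This follows from \eqref{A1}: the bound $|V(z)|+|V'(|z|)||z|\lesssim |z|^2+|z|^{p+1}$ together with $u\in C([-T,T];H^1)$ and $H^1\hookrightarrow L^{p+1}$ makes this function in fact continuous in $\tau$. Apart from this bookkeeping, no real obstacle arises, because all analytic difficulties are already absorbed into the two preceding propositions.
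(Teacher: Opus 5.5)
Your proposal is correct and follows the paper's argument: substitute the cross-term identity \eqref{eq:cross_term_identity} pointwise into the first-order virial identity \eqref{eq:vi_1st}, then integrate in time, using $2\int_0^t 2sE(\phi)\,ds = 2t^2E(\phi)$. Your remarks on the continuity of $s\mapsto \Im(\nabla u(s),xu(s))$ and of $\tau\mapsto\int_{\R^d}W(u(\tau))\,dx$ are correct, and they supply bookkeeping that the paper leaves implicit.
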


\begin{proof}
Substituting \eqref{eq:cross_term_identity} into \eqref{eq:vi_1st}, we obtain
\begin{align*}
\norm{xu(t)}_{L^2}^2
&= \norm{x\phi}_{L^2}^2
+ 2 \int_0^t
\left(
\Im (\nabla \phi, x\phi) + 2s E(\phi)
- \int_0^s \int_{\R^d} W(u(\tau)) \, dx\, d\tau
\right) ds \\
&= \norm{x\phi}_{L^2}^2
+ 2t \Im (\nabla \phi, x\phi) + 2t^2 E(\phi)
- 2 \int_0^t \int_0^s \int_{\R^d} W(u(\tau)) \, dx\, d\tau\, ds.
\end{align*}
\end{proof}

\section{Conclusion}

We have developed a unified integral-equation approach to the rigorous
justification of conservation laws and related identities for nonlinear
Schr\"odinger equations.
Its central ingredient is the master identity in Proposition \ref{prop:main},
from which we derive, within a unified framework, the conservation of
charge, energy, and momentum, together with the pseudo-conformal conservation
law and the virial-type identities.

A notable feature of this approach is that it works directly at the level of
mild solutions, without relying on approximation by smooth solutions or on
regularization of the equation.
In this sense, the master identity clarifies the duality
structure underlying these formal identities.

A natural direction for future work is to investigate whether this method extends beyond the
present setting to broader classes of dispersive or wave equations.
Related developments for singular interactions and quantum hydrodynamic
systems suggest that integral-equation arguments are particularly relevant when
approximation-based derivations become delicate.
It would therefore be interesting to understand whether, in more general
settings, analogous identities can still be recovered once a suitable
functional framework for mild solutions and dual pairings is available.

\section*{Acknowledgments}
H.M. was supported by JSPS KAKENHI Grant Number 22K13941 and 26K00612.
T.O. was supported by JSPS KAKENHI Grant Number 24H00024 and by JST Moonshot R\&D Grant Number JPMJMS25A5.



\begin{bibdiv}
\begin{biblist}


\bib{AT01}{article}{
   author={Adami, Riccardo},
   author={Teta, Alessandro},
   title={A class of nonlinear Schr\"odinger equations with concentrated
   nonlinearity},
   journal={J. Funct. Anal.},
   volume={180},
   date={2001},
   number={1},
   pages={148--175},
   issn={0022-1236},
   review={\MR{1814425}},
}

\bib{AMS24}{article}{
   author={Antonelli, Paolo},
   author={Marcati, Pierangelo},
   author={Scandone, Raffaele},
   title={Existence and stability of almost finite energy weak solutions to
   the quantum Euler-Maxwell system},
   language={English, with English and French summaries},
   journal={J. Math. Pures Appl. (9)},
   volume={191},
   date={2024},
   pages={Paper No. 103629, 38},
   issn={0021-7824},
   review={\MR{4823114}},
}

\bib{A21}{article}{
   author={Antonelli, Paolo},
   title={Remarks on the derivation of finite energy weak solutions to the
   QHD system},
   journal={Proc. Amer. Math. Soc.},
   volume={149},
   date={2021},
   number={5},
   pages={1985--1997},
   issn={0002-9939},
   review={\MR{4232191}},
}

\bib{Cazenave}{book}{
author={Cazenave, Thierry},
title={Semilinear {S}chr\"{o}dinger {E}quations},
series={Courant Lecture Notes in Mathematics},
publisher={New York University, Courant Institute of Mathematical Sciences, New York; American Mathematical Society, Providence, RI},
date={2003},
volume={10},
ISBN={0-8218-3399-5},
url={https://doi.org/10.1090/cln/010},
review={\MR{2002047}},
}

\bib{CW90}{article}{
author={Cazenave, Thierry},
author={Weissler, Fred B.},
title={The Cauchy problem for the critical nonlinear Schr\"odinger
equation in $H^s$},
journal={Nonlinear Anal.},
volume={14},
date={1990},
number={10},
pages={807--836},
issn={0362-546X},
review={\MR{1055532}},
}

\bib{CW92}{article}{
author={Cazenave, Thierry},
author={Weissler, Fred~B.},
title={Rapidly decaying solutions of the nonlinear {S}chr\"{o}dinger equation},
date={1992},
ISSN={0010-3616},
journal={Comm. Math. Phys.},
volume={147},
number={1},
pages={75\ndash 100},
url={http://projecteuclid.org/euclid.cmp/1104250527},
review={\MR{1171761}},
}

\bib{FM17}{article}{
author={Fujiwara, Kazumasa},
author={Miyazaki, Hayato},
title={The derivation of conservation laws for nonlinear {S}chr\"{o}dinger equations with power type nonlinearities},
date={2017},
journal={RIMS K\^{o}ky\^{u}roku Bessatsu, B63},
publisher={Res. Inst. Math. Sci. (RIMS), Kyoto},
pages={13\ndash 21},
review={\MR{3751978}},
}



\bib{Ginibre97}{article}{
   author={Ginibre, J.},
   title={An introduction to nonlinear Schr\"odinger equations},
   conference={
      title={Nonlinear waves},
      address={Sapporo},
      date={1995},
   },
   book={
      series={GAKUTO Internat. Ser. Math. Sci. Appl.},
      volume={10},
      publisher={Gakk\={o}tosho, Tokyo},
   },
   isbn={4-7625-0419-X},
   date={1997},
   pages={85--133},
   review={\MR{1602772}},
}

\bib{GOV94}{article}{
author={Ginibre, J.},
author={Ozawa, T.},
author={Velo, G.},
title={On the existence of the wave operators for a class of nonlinear {S}chr\"{o}dinger equations},
date={1994},
ISSN={0246-0211},
journal={Ann. Inst. H. Poincar\'{e} Phys. Th\'{e}or.},
volume={60},
number={2},
pages={211\ndash 239},
review={\MR{1270296}},
}

\bib{GV79}{article}{
author={Ginibre, J.},
author={Velo, G.},
title={On a class of nonlinear Schr\"odinger equations. I. The Cauchy problem, general case},
journal={J. Functional Analysis},
volume={32},
date={1979},
number={1},
pages={1--32},
issn={0022-1236},
review={\MR{0533218}},
}

\bib{GV85}{article}{
author={Ginibre, J.},
author={Velo, G.},
title={The global {C}auchy problem for the nonlinear {S}chr\"{o}dinger equation revisited},
date={1985},
ISSN={0294-1449},
journal={Ann. Inst. H. Poincar\'{e} Anal. Non Lin\'{e}aire},
volume={2},
number={4},
pages={309\ndash 327},
url={http://www.numdam.org/item?id=AIHPC_1985__2_4_309_0},
review={\MR{801582}},
}

\bib{Glassey77}{article}{
author={Glassey, R.~T.},
title={On the blowing up of solutions to the {C}auchy problem for nonlinear {S}chr\"{o}dinger equations},
date={1977},
ISSN={0022-2488},
journal={J. Math. Phys.},
volume={18},
number={9},
pages={1794\ndash 1797},
url={https://doi.org/10.1063/1.523491},
review={\MR{460850}},
}


\bib{IMMO25}{article}{
author={Ikeda, Tomoyuki},
author={Machihara, Shuji},
author={Miyazaki, Hayato},
author={Ozawa, Tohru},
title={Remarks on the derivation of the virial identity for nonlinear {S}chr\"{o}dinger equations},
journal={preprint},
eprint={arXiv:2510.18168},
url={https://arxiv.org/abs/2510.18168},
}

\bib{Kato87}{article}{
author={Kato, Tosio},
title={On nonlinear {S}chr\"{o}dinger equations},
date={1987},
ISSN={0246-0211},
journal={Ann. Inst. H. Poincar\'{e} Phys. Th\'{e}or.},
volume={46},
number={1},
pages={113\ndash 129},
url={http://www.numdam.org/item?id=AIHPB_1987__46_1_113_0},
review={\MR{877998}},
}

\bib{Kato89}{article}{
   author={Kato, Tosio},
   title={Nonlinear Schr\"odinger equations},
   conference={
      title={Schr\"odinger operators},
      address={S\o nderborg},
      date={1988},
   },
   book={
      series={Lecture Notes in Phys.},
      volume={345},
      publisher={Springer, Berlin},
   },
   isbn={3-540-51783-9},
   date={1989},
   pages={218--263},
   review={\MR{1037322}},
}

\bib{Kato95}{article}{
author={Kato, Tosio},
title={On nonlinear Schr\"odinger equations. II. $H^s$-solutions and
unconditional well-posedness},
journal={J. Anal. Math.},
volume={67},
date={1995},
pages={281--306},
issn={0021-7670},
review={\MR{1383498}},
}

\bib{Kato96}{article}{
author={Kato, Tosio},
title={Correction to: ``On nonlinear Schr\"odinger equations. II.
$H^s$-solutions and unconditional well-posedness''},
journal={J. Anal. Math.},
volume={68},
date={1996},
pages={305},
issn={0021-7670},
review={\MR{1403260}},
}

\bib{KT98}{article}{
author={Keel, Markus},
author={Tao, Terence},
title={Endpoint {S}trichartz estimates},
date={1998},
ISSN={0002-9327},
journal={Amer. J. Math.},
volume={120},
number={5},
pages={955\ndash 980},
url={http://muse.jhu.edu/journals/american_journal_of_mathematics/v120/120.5keel.pdf},
review={\MR{1646048}},
}

\bib{MR05}{article}{
author={Merle, Frank},
author={Rapha\"{e}l, Pierre},
title={The blow-up dynamic and upper bound on the blow-up rate for critical nonlinear {S}chr\"{o}dinger equation},
date={2005},
ISSN={0003-486X},
journal={Ann. of Math. (2)},
volume={161},
number={1},
pages={157\ndash 222},
url={https://doi.org/10.4007/annals.2005.161.157},
review={\MR{2150386}},
}

\bib{Ozawa06}{article}{
author={Ozawa, T.},
title={Remarks on proofs of conservation laws for nonlinear {S}chr\"{o}dinger equations},
date={2006},
ISSN={0944-2669},
journal={Calc. Var. Partial Differential Equations},
volume={25},
number={3},
pages={403\ndash 408},
url={https://doi.org/10.1007/s00526-005-0349-2},
review={\MR{2201679}},
}



\bib{S25}{article}{
   author={Scandone, Raffaele},
   title={Global, finite energy, weak solutions to a magnetic quantum fluid
   system},
   journal={Netw. Heterog. Media},
   volume={20},
   date={2025},
   number={2},
   pages={345--355},
   issn={1556-1801},
   review={\MR{4897328}},
}

\bib{S77}{article}{
author={Strichartz, Robert~S.},
title={Restrictions of {F}ourier transforms to quadratic surfaces and decay of solutions of wave equations},
date={1977},
ISSN={0012-7094},
journal={Duke Math. J.},
volume={44},
number={3},
pages={705\ndash 714},
url={http://projecteuclid.org/euclid.dmj/1077312392},
review={\MR{512086}},
}

\bib{SS99}{book}{
author={Sulem, Catherine},
author={Sulem, Pierre-Louis},
title={The {N}onlinear Schr\"odinger {E}quation, {S}elf-focusing and wave collapse},
series={Applied Mathematical Sciences},
volume={139},
publisher={Springer-Verlag, New York},
date={1999},
pages={xvi+350},
isbn={0-387-98611-1},
review={\MR{1696311}},
}

\bib{Tsutsumi87}{article}{
author={Tsutsumi, Yoshio},
title={{$L^2$}-solutions for nonlinear {S}chr\"{o}dinger equations and nonlinear groups},
date={1987},
ISSN={0532-8721},
journal={Funkcial. Ekvac.},
volume={30},
number={1},
pages={115\ndash 125},
url={http://www.math.kobe-u.ac.jp/~fe/xml/mr0915266.xml},
review={\MR{915266}},
}

\bib{Y87}{article}{
author={Yajima, Kenji},
title={Existence of solutions for {S}chr\"{o}dinger evolution equations},
date={1987},
ISSN={0010-3616},
journal={Comm. Math. Phys.},
volume={110},
number={3},
pages={415\ndash 426},
url={http://projecteuclid.org/euclid.cmp/1104159313},
review={\MR{891945}},
}

\end{biblist}
\end{bibdiv}
\end{document}